\documentclass[10pt,dvipsnames]{amsart}
\usepackage{hyperref}
\usepackage{amssymb, amscd, amsmath, amsfonts, amsthm}
\usepackage{stmaryrd}
\usepackage{verbatim}
\usepackage{multicol}



%
%
%
%
\def\unprotectedboldentry#1{\textcolor{Red}{\textbf{#1}}}
\def\boldentry{\protect\unprotectedboldentry}
\usepackage{tikz}
\usetikzlibrary{calc}
\usepackage{ifthen}
\newcommand{\tikztableau}[2][scale=0.6,every node/.style={font=\small}]{
    \def\newtableau{#2}
    \begin{array}{c}
    \begin{tikzpicture}[#1]
    \coordinate (x) at (-0.5,0.5);
    \coordinate (y) at (-0.5,0.5);
    \foreach \row in \newtableau {
        \coordinate (x) at ($(x)-(0,1)$);
        \coordinate (y) at (x);
        \foreach \entry in \row {
            \ifthenelse{\equal{\entry}{X}}
               {
                \node (y) at ($(y) + (1,0)$) {};
                \fill[color=gray!10] ($(y)-(0.5,0.5)$) rectangle +(1,1);
                \draw[color=gray] ($(y)-(0.5,0.5)$) rectangle +(1,1);
               }
               {
                \ifthenelse{\equal{\entry}{\boldentry X}}
                   {
                    \node (y) at ($(y) + (1,0)$) {};
                    \fill[color=gray] ($(y)-(0.5,0.5)$) rectangle +(1,1);
                    \draw ($(y)-(0.5,0.5)$) rectangle +(1,1);
                   }
                   {
                    \node (y) at ($(y) + (1,0)$) {\entry};
                    \draw ($(y)-(0.5,0.5)$) rectangle +(1,1);
                   }
               }
            }
        }
    \end{tikzpicture}
    \end{array}}


\newcommand\kschur[1][k]{s^{(#1)}}

\def\sym{\operatorname{\mathsf{Sym}}}
\def\symk{\operatorname{\mathsf{Sym}_{(k)}}}
\def\symkk{\operatorname{\mathsf{Sym}}^{(k)}}
\def\Qsym{\operatorname{\mathsf{QSym}}}
\def\Nsym{\operatorname{\mathsf{NSym}}}
\def\Qsymk{\operatorname{\mathsf{QSym}}^{(k)}}
\def\Nsymk{\operatorname{\mathsf{NSym}}_{(k)}}
\def\P{\operatorname{\rm{\mathcal{P}}}}
\def\Pk{\operatorname{\rm{\mathcal{P}}^k}}
\def\C{\operatorname{\rm{\mathcal{C}}}}
\def\Ck{\operatorname{\rm{\mathcal{C}}^k}}
\newcommand\QS[1][\alpha]{\rm{S}^\star_{#1}}
\newcommand\NS[1][\alpha]{\rm{S}_{#1}}
\newcommand\QSk[1][\alpha]{\rm{S^{(k) \star}_{#1}}}
\newcommand\NSk[1][\alpha]{\rm{S^{(k)}_{#1}}}

\newcommand\dualkschur[1][\lambda]{\rm{{s}}^{(k) \star}_{#1}}

\newtheorem{Theorem}{Theorem}[section]

\newtheorem{Corollary}[Theorem]{Corollary}

\newtheorem{Example}[Theorem]{Example}
\newtheorem{Remark}[Theorem]{Remark}
\newtheorem{Definition}[Theorem]{Definition}

\allowdisplaybreaks

\begin{document}

\title{Quasi-symmetric and non-commutative Affine Schur Functions}
\author{Chris Berg \and Luis Serrano}
\date{\today}

\begin{abstract}
We introduce dual Hopf algebras which simultaneously combine the concepts of the $k$-Schur function theory of \cite{LLM03, LM08, Lam06} with the quasi-symmetric Schur function theory of \cite{HLMvW, BLvW}. Mimicking their procedures, we construct dual basis of these Hopf algebras with remarkable properties.
\end{abstract}
\maketitle

\section{Introduction}

The $k$-Schur functions $s^{(k)}_\lambda(X;t)$ of Lapointe, Lascoux and Morse \cite{LLM03} first arose in the study of Macdonald polynomials. Since then, Lam \cite{Lam08} verified a conjecture of Morse and Shimozono, which realized $s^{(k)}_\lambda(X,1)$ as the Schubert basis for the homology of the affine Grassmannian. Lapointe and Morse \cite{LM08} also verified that the structure coefficients for these polynomials gave 3-point Gromov-Witten invariants and fusion coefficients in relevant cases. On the combinatorial side, many analogous notions to the combinatorics of Schur functions were developed, some of which we discuss here as motivation. The space of $k$-Schur functions has a Hopf structure. The corresponding dual Hopf algebra was first studied by Lapointe and Morse \cite{LM08} and Lam \cite{Lam06}. The dual $k$-Schur functions similarly play a role in the cohomology of the affine Grassmannian.

More recently, Haglund, Luoto, Mason and van Willigenburg \cite{HLMvW} developed the notion of quasi-symmetric Schur functions for the algebra $\Qsym$. This is a new basis of $\Qsym$ with many remarkable properties generalizing the notion of Schur functions. The space $\Nsym$ of non-commutative symmetric functions is the Hopf dual to $\Qsym$. The functions dual to quasi-symmetric Schur functions will here by called non-commutative Schur functions. Of particular importance to us is the non-commutative Pieri rule discussed in \cite{BLvW}.

The purpose of this paper is to bring the two ideas of $k$-Schur functions and quasi-symmetric Schur functions together. Using only the basic concepts from both subjects, we introduce two new Hopf algebras $\Qsymk$ and $\Nsymk$ which are dual to one another and build Schur like bases for both spaces. In Section \ref{sec:schur} we use the example of Schur functions to motivate our construction. In Section \ref{sec:kschur}, we remind the reader of $k$-Schur functions by applying the same construction in the context of $k$-Schur functions. In Section \ref{sec:quasi}, we do the same in the context of quasi-symmetric Schur functions. Finally, in Section \ref{sec:kquasi}, we define and develop our new theory.

\section{Schur functions and the self dual Hopf algebra $\sym$}\label{sec:schur}

In this section, we recall basic results concerning symmetric functions. We refer the reader to \cite{Sta99}. We discuss these classical results only to develop an analogy with $k$-Schur functions, quasi-symmetric Schur functions, and ultimately quasi-symmetric affine Schur functions.

We let $\P$ denote the set of all partitions and $\sym$ denote the algebra of symmetric functions. 
The $i^{th}$ complete homogenous symmetric function $h_i$ is the symmetric function which contains all monomials of degree $i$. It is well known that the $h_i$ are algebraically independent and generate $\sym$, i.e. $\sym = \mathbb{Q}[h_1, h_2, \dots]$. With this description, a natural basis for $\sym$ is $h_\lambda := h_{\lambda_1} h_{\lambda_2} \cdots h_{\lambda_m}$ over all partitions $\lambda = (\lambda_1, \lambda_2, \dots, \lambda_m) \in \P$.

There exists another basis of $\sym$, called the monomial symmetric functions $\{m_\lambda\}_{\lambda \in P}$, defined by, \[m_\lambda = \sum_{\alpha} x^\alpha,\] where the sum is over all distinct $\alpha$ which rearrange to form $\lambda$.
$\sym$ has a coproduct, making it into a Hopf algebra. 

\begin{Theorem} 
The Hopf algebra $\sym$ is self dual. Under this duality, the basis $\{m_\lambda\}$ is dual to $\{h_\lambda\}$.
\end{Theorem}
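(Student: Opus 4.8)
The plan is to exhibit an explicit non-degenerate Hopf pairing on $\sym$ and show that it makes $\{h_\lambda\}$ and $\{m_\lambda\}$ dual bases; self-duality then follows because both the algebra structure and the coalgebra structure are respected. First I would recall that $\sym$ is graded with each graded piece $\sym^n$ finite-dimensional, so it suffices to work degree by degree and the pairing is automatically non-degenerate once the two bases pair up correctly. The central computation is to verify the orthogonality relation $\langle h_\lambda, m_\mu \rangle = \delta_{\lambda\mu}$; the standard way to establish this is via the Cauchy identity $\prod_{i,j}(1-x_iy_j)^{-1} = \sum_\lambda h_\lambda(x)\, m_\lambda(y) = \sum_\lambda m_\lambda(x)\, h_\lambda(y)$, which one proves by expanding each factor $(1-x_iy_j)^{-1}$ as a geometric series and collecting terms according to the partition shape of the exponent array. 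A pairing $\langle\,,\,\rangle$ has $\{h_\lambda\},\{m_\mu\}$ as dual bases if and only if $\sum_\lambda h_\lambda(x)\,m_\lambda(y)$ equals the "reproducing kernel'' $\sum_\lambda u_\lambda(x)\,v_\lambda(y)$ for any pair of dual bases, so the Cauchy identity is exactly the statement we need.

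Next I would check that this pairing is a Hopf pairing, i.e. that it intertwines product with coproduct: $\langle ab, c\rangle = \langle a\otimes b, \Delta c\rangle$ and dually $\langle \Delta a, b\otimes c\rangle = \langle a, bc\rangle$, together with compatibility of units and counits. Since $\sym = \mathbb{Q}[h_1,h_2,\dots]$ is a free polynomial algebra on the $h_i$, and the coproduct is the algebra map determined by $\Delta h_i = \sum_{j=0}^i h_j \otimes h_{i-j}$, it is enough to verify the pairing axioms on the generators $h_i$ and on the $m_\mu$, where everything reduces to a short bookkeeping check: $\langle h_i h_j, m_\mu\rangle$ counts the ways $\mu$ can be split, matching $\langle h_i\otimes h_j, \Delta m_\mu\rangle$. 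I would phrase this by noting that multiplication in the $h$-basis, $h_\lambda h_\mu = h_{\lambda\cup\mu}$, is precisely dual to the "deconcatenation-like'' coproduct on the $m$-basis, $\Delta m_\nu = \sum_{\lambda \cup \mu = \nu} m_\lambda \otimes m_\mu$ (summing over ways of partitioning the multiset of parts of $\nu$), which is the combinatorial heart of the matter.

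Finally, self-duality is the formal conclusion: a non-degenerate Hopf pairing $\sym \times \sym \to \mathbb{Q}$ induces a Hopf algebra map $\sym \to \sym^\circ$ (the graded dual), and because each graded component is finite-dimensional and the pairing is perfect in each degree, this map is an isomorphism. The main obstacle, such as it is, is purely organizational rather than deep: one must be careful that the combinatorial identity $\Delta m_\nu = \sum_{\lambda\cup\mu=\nu} m_\lambda\otimes m_\mu$ is stated with the correct multiset conventions (so that, e.g., repeated parts are handled without overcounting), since getting this bookkeeping exactly right is what makes the duality $h \leftrightarrow m$ clean. Everything else — the Cauchy identity, the free-polynomial-algebra reduction, the finite-dimensionality argument — is classical and can be cited from \cite{Sta99}.
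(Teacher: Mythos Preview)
Your proposal is correct and follows the standard classical route (Cauchy identity for the duality of $\{h_\lambda\}$ and $\{m_\lambda\}$, then verification that the pairing is a Hopf pairing via the coproduct formulas on generators). There is nothing to compare against, however: the paper does not prove this theorem at all. It is stated as a background fact in Section~\ref{sec:schur}, which opens with ``we recall basic results concerning symmetric functions. We refer the reader to \cite{Sta99}.'' The theorem is simply asserted and immediately used to define the pairing $\langle m_\lambda, h_\mu\rangle_{\P} = \delta_{\lambda,\mu}$; the authors' intent is that the reader either knows it or looks it up in Stanley. Your sketch is essentially the argument one finds there, so in that sense it matches the cited source, but the paper itself offers no proof to compare with.
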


We let $\langle \cdot, \cdot \rangle_{\P}$ denote the scalar product on $\sym$ defined by $\langle m_\lambda, h_\mu \rangle_{\P} = \delta_{\lambda, \mu}$. 

\subsection{Horizontal strips and Schur functions}

For two partitions $\mu \subset \lambda$, we say that $\lambda / \mu$ is a horizontal strip of size $i$ if the corresponding diagram contains $i$ cells, no two of which are in the same column.

A semistandard tableaux of shape $\lambda$ and content $\mu$ is a sequence of partitions $\emptyset = \nu^0 \subset \nu^1 \subset \nu^2 \subset \cdots \subset \nu^m = \lambda$ such that the skew diagrams $\nu^i / \nu^{i-1}$ are horizontal strips of size $\mu_i$. The number of semistandard tableaux of shape $\lambda$ and content $\mu$ is denoted $K_{\lambda, \mu}$. 

\begin{Definition}
The Schur functions $s_\lambda$ are the basis of $\sym$ uniquely defined by the Pieri rule: 
\[ h_i s_\lambda  = \sum_\mu s_\mu,\] where the sum is over all partitions $\mu$ for which $\mu / \lambda$ is a horizontal strip of size $i$.
\end{Definition}

\begin{Corollary}\label{rem:schur}
For a partition $\mu$, \[h_\mu = \sum_{\lambda} K_{\lambda, \mu} s_\lambda .\]
\end{Corollary}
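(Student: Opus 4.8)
The plan is to iterate the Pieri rule defining the Schur functions. Starting from $h_\mu = h_{\mu_1} h_{\mu_2} \cdots h_{\mu_m}$, I would expand the product one factor at a time, reading the factors from right to left (or left to right — the statement is symmetric enough that either works, but let me commit to building up the partition). Begin with $h_{\mu_m} = h_{\mu_m} s_\emptyset = \sum_{\nu^1} s_{\nu^1}$, summing over all $\nu^1$ such that $\nu^1/\emptyset$ is a horizontal strip of size $\mu_m$. Then multiply by $h_{\mu_{m-1}}$ and apply the Pieri rule to each term, and continue. After applying all $m$ factors, $h_\mu$ becomes a sum over all chains
\[
\emptyset = \nu^0 \subset \nu^1 \subset \nu^2 \subset \cdots \subset \nu^m = \lambda
\]
in which each $\nu^i/\nu^{i-1}$ is a horizontal strip of the appropriate size, of the term $s_\lambda$.

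The key step is then to recognize, via the definition given just above the statement, that such a chain of nested partitions with horizontal-strip steps of sizes $\mu_m, \mu_{m-1}, \dots, \mu_1$ is exactly a semistandard tableau of shape $\lambda$ — up to reindexing the content, which does not change the count since $K_{\lambda,\mu}$ depends only on $\mu$ as a composition's underlying multiset in the relevant sense (and here $\mu$ is a partition). Hence the coefficient of $s_\lambda$ in the expanded expression is precisely the number of such chains, namely $K_{\lambda,\mu}$, giving
\[
h_\mu = \sum_\lambda K_{\lambda,\mu} s_\lambda.
\]
One should note that the Pieri rule tells us $\{s_\lambda\}$ is a genuine basis (it is defined to be one), so this expansion is well-defined and the coefficients are uniquely determined.

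I would present this as a short induction on $m = \ell(\mu)$, or equivalently on the number of Pieri steps. The only mild subtlety — and the place I expect to spend a sentence being careful — is the bookkeeping of the content: the Pieri rule adds a horizontal strip of a fixed size but carries no "color," so I need to observe that specifying the sizes $\mu_1, \dots, \mu_m$ of the successive strips is the same data as a semistandard tableau of content $\mu$, and that it is harmless to apply the factors of $h_\mu$ in any order because the resulting count of chains is the same (this is the standard fact that horizontal-strip chains counting $K_{\lambda,\mu}$ is independent of the order of the parts of $\mu$). Beyond that, the proof is a direct unwinding of definitions, so there is no real obstacle.
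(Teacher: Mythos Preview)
Your proposal is correct and matches the paper's approach exactly: the paper's proof is the single sentence ``This follows from induction and the Pieri rule,'' and you have simply written out what that induction looks like. The only extra care you take---the bookkeeping about the order of the parts of $\mu$---is a reasonable remark but can be sidestepped entirely by applying the commuting factors $h_{\mu_1},\dots,h_{\mu_m}$ left to right, so that the resulting chain directly has steps of sizes $\mu_1,\dots,\mu_m$ and matches the definition of a semistandard tableau of content $\mu$ without any reindexing.
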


\begin{proof}
This follows from induction and the Pieri rule.
\end{proof}

\begin{Theorem}\label{thm:selfdual}
The Schur functions $\{ s_\lambda\}$ are their own dual basis. In other words, $\langle s_\lambda, s_\mu\rangle_{\P} = \delta_{\lambda, \mu}$.
\end{Theorem}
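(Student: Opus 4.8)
The plan is to deduce the statement from Corollary~\ref{rem:schur} and the classical monomial expansion of the Schur functions by a short matrix computation, using the fact that pairing against $h_\mu$ extracts the coefficient of $m_\mu$. Throughout one works degree by degree, so all the sums and matrices involved are finite.

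First I would observe that the Kostka numbers are triangular. If $\emptyset = \nu^0 \subset \nu^1 \subset \cdots \subset \nu^m = \lambda$ is a chain of horizontal strips with $|\nu^i / \nu^{i-1}| = \mu_i$, then each $\nu^i$ has at most $i$ rows (a horizontal strip adds at most one cell to each column), so $\mu_1 + \cdots + \mu_i = |\nu^i| \le \lambda_1 + \cdots + \lambda_i$ for all $i$; hence $K_{\lambda,\mu} = 0$ unless $\lambda$ dominates $\mu$, and the same count shows $K_{\mu,\mu} = 1$. Ordering the partitions of each fixed size by a linear extension of the dominance order, the matrix $(K_{\lambda,\mu})$ is therefore unitriangular, hence invertible, so Corollary~\ref{rem:schur} indeed exhibits $\{s_\lambda\}$ as a basis of $\sym$. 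The second input is the monomial expansion $s_\lambda = \sum_\mu K_{\lambda,\mu}\, m_\mu$, i.e. the coefficient of $m_\mu$ in $s_\lambda$ is again the number of semistandard tableaux of shape $\lambda$ and content $\mu$; this is the identification of the Pieri-defined $s_\lambda$ with the tableau generating function $\sum_T x^T$, which I would either cite from \cite{Sta99} or prove by an RSK-type argument (equivalently, the identity $\langle h_\mu, h_\nu\rangle_{\P} = \sum_\lambda K_{\lambda,\mu}K_{\lambda,\nu}$). The point to watch is that this step must not secretly invoke self-duality, or the argument becomes circular.

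Granting these two inputs, the computation closes quickly. Since $\langle m_\nu, h_\mu\rangle_{\P} = \delta_{\nu,\mu}$, pairing any symmetric function against $h_\mu$ reads off its $m_\mu$-coefficient, so the monomial expansion gives $\langle s_\lambda, h_\mu\rangle_{\P} = K_{\lambda,\mu}$. On the other hand, substituting $h_\mu = \sum_\nu K_{\nu,\mu}\, s_\nu$ from Corollary~\ref{rem:schur} gives $\langle s_\lambda, h_\mu\rangle_{\P} = \sum_\nu K_{\nu,\mu}\,\langle s_\lambda, s_\nu\rangle_{\P}$. Writing $G$ for the matrix with entries $G_{\lambda,\nu} = \langle s_\lambda, s_\nu\rangle_{\P}$ and comparing the two expressions for all $\lambda,\mu$ yields $K = GK$ in each degree; since $K$ is invertible, $G$ is the identity matrix, which is exactly $\langle s_\lambda, s_\mu\rangle_{\P} = \delta_{\lambda,\mu}$. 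The main obstacle is thus the monomial expansion of $s_\lambda$: everything else is the triangularity of the Kostka matrix together with bookkeeping.
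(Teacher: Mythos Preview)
The paper does not prove this theorem; it is stated without proof as a classical fact, with a blanket reference to \cite{Sta99} at the start of the section. So there is no paper proof to compare against.

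Your argument is correct and is one of the standard routes. You have also correctly identified the only delicate point: in the paper's logical order the monomial expansion $s_\lambda = \sum_\mu K_{\lambda,\mu}\,m_\mu$ is the theorem \emph{after} this one and is deduced \emph{from} self-duality (via Corollary~\ref{rem:schur} and Theorem~\ref{thm:selfdual}), so using it here requires the independent justification you sketch. The RSK variant you mention is the cleanest way to close the loop without any risk of circularity: from $\langle h_\mu, h_\nu\rangle_{\P} = \sum_\lambda K_{\lambda,\mu}K_{\lambda,\nu}$ and Corollary~\ref{rem:schur} you get $K^{T} K = K^{T} G K$ in each degree, and the unitriangularity of $K$ that you established forces $G = I$.
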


The self duality of the Schur functions allows us to determine the monomial expansion of a Schur function. For our purposes, one should think of the monomial expansion as being dual to the Pieri rule.

\begin{Theorem}
The Schur function expands in the monomial basis as: \[s_\lambda = \sum_{\mu} K_{\lambda, \mu} m_\mu. \]
\end{Theorem}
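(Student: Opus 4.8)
The plan is to extract the monomial coefficients of $s_\lambda$ by pairing against the complete homogeneous basis, using the duality $\langle m_\lambda, h_\mu\rangle_{\P} = \delta_{\lambda,\mu}$ together with the two facts already in hand: the self-duality of the Schur basis (Theorem \ref{thm:selfdual}) and the expansion $h_\mu = \sum_\lambda K_{\lambda,\mu} s_\lambda$ (Corollary \ref{rem:schur}).

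Concretely, I would first write $s_\lambda = \sum_\mu c_{\lambda,\mu}\, m_\mu$ for scalars $c_{\lambda,\mu}$ to be determined; this is legitimate since $\{m_\mu\}$ is a basis of $\sym$. Since $\{m_\mu\}$ and $\{h_\mu\}$ are dual bases under $\langle\cdot,\cdot\rangle_{\P}$, pairing both sides against $h_\nu$ isolates the coefficient: $c_{\lambda,\nu} = \langle s_\lambda, h_\nu\rangle_{\P}$. Next I would substitute the Kostka expansion of $h_\nu$ from Corollary \ref{rem:schur}, obtaining
\[
c_{\lambda,\nu} = \Big\langle s_\lambda,\ \sum_{\rho} K_{\rho,\nu}\, s_\rho\Big\rangle_{\P} = \sum_{\rho} K_{\rho,\nu}\, \langle s_\lambda, s_\rho\rangle_{\P}.
\]
Finally, by the self-duality of the Schur functions (Theorem \ref{thm:selfdual}), $\langle s_\lambda, s_\rho\rangle_{\P} = \delta_{\lambda,\rho}$, so the sum collapses to $c_{\lambda,\nu} = K_{\lambda,\nu}$, which is exactly the claimed identity.

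There is essentially no obstacle here: the argument is a short formal manipulation once the Pieri rule has been promoted to the $h_\mu$-expansion and the self-duality of $\{s_\lambda\}$ is available. The only points requiring a word of care are that the pairing is bilinear and that the expansions involved are finite in each degree (so the interchange of pairing and summation is valid), both of which are immediate since everything takes place within a fixed homogeneous component of $\sym$. One may also remark that this computation is the precise sense in which the monomial expansion of $s_\lambda$ is "dual" to the Pieri rule, as indicated in the discussion preceding the statement.
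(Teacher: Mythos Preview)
Your argument is correct and is essentially identical to the paper's own proof: both compute $\langle s_\lambda, h_\mu\rangle_{\P}$ by substituting the expansion $h_\mu = \sum_\nu K_{\nu,\mu} s_\nu$ from Corollary~\ref{rem:schur} and then invoking the orthonormality of the Schur basis from Theorem~\ref{thm:selfdual}. The only difference is that you spell out the intermediate step of writing $s_\lambda = \sum_\mu c_{\lambda,\mu} m_\mu$ explicitly, which the paper leaves implicit.
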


\begin{proof}
Using  Corollary \ref{rem:schur} and Theorem \ref{thm:selfdual}, \[ \langle s_\lambda, h_\mu \rangle_{\P} = \langle s_\lambda, \sum_{\mu} K_{\nu, \mu} s_\nu \rangle_{\P} = K_{\lambda, \mu},\] so the coefficient of $m_\mu$ in $s_\lambda$ is $K_{\lambda, \mu}$.
\end{proof}

\section{$k$-Schur functions and the dual Hopf algebras $\symkk$ and $\symk$}\label{sec:kschur}

In this section, we recall basic results concerning $k$-Schur functions. We refer the reader to \cite{LLM03, LM03, LM05, Lam06, LM07,  LS07, LM08,LLMS10, Lam10}.

We fix $k \geq 1$ and let $\Pk$ denote the set of all $k$-bounded partitions. 
We let $\symk = \mathbb{Q}[h_1, h_2, \dots, h_k] \subset \sym$. A natural basis for $\symk$ is $h_\lambda := h_{\lambda_1} h_{\lambda_2} \cdots h_{\lambda_m}$ for all $k$-bounded partitions $\lambda = (\lambda_1, \lambda_2, \dots, \lambda_m) \in \Pk$.

We let $\symkk := \sym /\langle m_\lambda: \lambda \not \in \Pk\rangle$. The simple basis for this space is the collection of $k$-bounded monomials $\{m_\lambda \}_{\lambda \in \Pk}$. 

$\symk$ and $\symkk$ inherit a Hopf algebra structure from $\sym$.

\begin{Theorem} 
The Hopf algebras $\symkk$ and $\symk$  are dual. Under this duality, the basis $\{m_\lambda\}_{\lambda \in \Pk}$ is dual to $\{h_\lambda\}_{\lambda \in \Pk}$.
\end{Theorem}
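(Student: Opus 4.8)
The plan is to exploit the fact that $\symk$ and $\symkk$ are obtained from the self-dual Hopf algebra $\sym$ by a sub/quotient construction that is compatible with the pairing $\langle\cdot,\cdot\rangle_{\P}$. Concretely, $\symk = \mathbb{Q}[h_1,\dots,h_k]$ is a Hopf subalgebra of $\sym$, while $\symkk = \sym/I$ where $I = \langle m_\lambda : \lambda\notin\Pk\rangle$ is a Hopf ideal. The first step is to check that $I$ is indeed a coideal (and ideal), so that $\symkk$ genuinely inherits a Hopf structure, and likewise that $\mathbb{Q}[h_1,\dots,h_k]$ is closed under the coproduct; this is where one uses that $\Delta h_i = \sum_{j=0}^{i} h_j\otimes h_{i-j}$ stays within low degrees and that the $m_\lambda$ with $\lambda\notin\Pk$ span a coideal because a monomial $x^\alpha$ with some part $>k$ cannot split as a product of two monomials both of whose exponent multisets are $k$-bounded in the relevant sense. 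Most of this is routine and can be cited from the $k$-Schur literature.

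Next I would identify the pairing. By the previous (self-duality) theorem, $\langle m_\lambda, h_\mu\rangle_{\P} = \delta_{\lambda,\mu}$ on all of $\sym$. The key observation is that $I = \langle m_\lambda : \lambda\notin\Pk\rangle$ is exactly the orthogonal complement (the annihilator) of $\symk = \mathrm{span}\{h_\mu : \mu\in\Pk\}$ under $\langle\cdot,\cdot\rangle_{\P}$: indeed $m_\lambda$ for $\lambda\notin\Pk$ pairs to zero with every $h_\mu$, $\mu\in\Pk$, and conversely any element pairing to zero with all such $h_\mu$ has no $m_\mu$ component for $\mu\in\Pk$. Standard Hopf-duality yields that the quotient $\sym/\mathrm{ann}(\symk)$ is canonically dual to the subalgebra $\symk$, and that this duality is one of Hopf algebras (the quotient's product is dual to $\symk$'s coproduct and vice versa). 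This gives a well-defined nondegenerate pairing $\symkk \times \symk \to \mathbb{Q}$.

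Finally I would verify the explicit statement about bases. The images $\{m_\lambda\}_{\lambda\in\Pk}$ form a basis of $\symkk$ (the monomials indexed by non-$k$-bounded partitions being precisely what we modded out), and $\{h_\lambda\}_{\lambda\in\Pk}$ is the stated basis of $\symk$. Under the induced pairing, $\langle m_\lambda, h_\mu\rangle = \delta_{\lambda,\mu}$ for $\lambda,\mu\in\Pk$ follows immediately from the corresponding identity in $\sym$, since the pairing on the quotient is induced by restriction of the pairing on $\sym$ to the subalgebra. Hence $\{m_\lambda\}_{\lambda\in\Pk}$ and $\{h_\lambda\}_{\lambda\in\Pk}$ are dual bases, and the nondegeneracy on each spanning set forces the pairing to be a perfect Hopf-algebra duality.

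The main obstacle is the first step: verifying that $I$ is a genuine Hopf ideal (in particular a coideal) so that $\symkk$ is a Hopf algebra and its coproduct is honestly dual to the product on $\symk$. One must check $\Delta(I)\subseteq I\otimes\sym + \sym\otimes I$ and $\varepsilon(I)=0$; the latter is clear, and the former amounts to a combinatorial statement about how monomial symmetric functions coproduct, namely $\Delta m_\lambda = \sum_{\mu\cup\nu=\lambda} m_\mu\otimes m_\nu$, from which one sees that if $\lambda$ has a part exceeding $k$ then so does $\mu$ or $\nu$. Everything downstream — nondegeneracy, the dual-basis identity, compatibility of the two Hopf structures — is then a formal consequence of Hopf duality together with the self-duality theorem for $\sym$ already established.
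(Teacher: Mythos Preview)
Your argument is correct and is exactly the standard sub--Hopf--algebra versus quotient--by--annihilator construction. Note, however, that the paper does not actually supply a proof of this theorem: the section on $k$-Schur functions is explicitly a recollection of known results, and this statement is asserted without proof, with the reader referred to the cited $k$-Schur literature. The only indication of the intended argument appears later, where the analogous duality of $\Qsymk$ and $\Nsymk$ is ``proved'' with the single sentence \emph{Follows in the same way that $\symkk$ and $\symk$ are realized as dual Hopf algebras}; this confirms that what you wrote is precisely the argument the authors have in mind but do not spell out.

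One small point worth tightening: you flag the coideal check $\Delta(I)\subset I\otimes\sym+\sym\otimes I$ and handle it via $\Delta m_\lambda=\sum_{\mu\cup\nu=\lambda}m_\mu\otimes m_\nu$, but you do not give the parallel check that the linear span of $\{m_\lambda:\lambda\notin\Pk\}$ is closed under multiplication by $\sym$. This follows by the same mechanism, using that in the product expansion $m_\mu m_\lambda=\sum_\nu c^{\nu}_{\mu\lambda}m_\nu$ every $\nu$ appearing has each part at least as large as some part of $\lambda$ (since any monomial in $m_\mu m_\lambda$ has exponent vector $\alpha+\beta$ with $\beta$ a rearrangement of $\lambda$). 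With that in place your outline is complete.
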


The pairing $\langle \cdot, \cdot \rangle_{\Pk}: \symkk \times \symk \rightarrow \mathbb{Q}$ is defined by $\langle m_\lambda, h_\mu \rangle_{\Pk} = \delta_{\lambda, \mu}$. 

\subsection{Horizontal $k$-strips and $k$-Schur functions}

There exists an involution $\omega_k$ on the set of $k$-bounded partitions sending $\lambda$ to  $\lambda^{\omega_k}$. We will not describe it here, but it was first defined in \cite{LLM03}.

For two $k$-bounded partitions $\mu \subset \lambda$, we say that $\lambda / \mu$ is a horizontal $k$-strip if $\lambda/ \mu$ is a horizontal strip and $\lambda^{\omega_k}/\mu^{\omega_k}$ is a vertical strip (no two cells in same row).

A semistandard $k$-tableaux of shape $\lambda$ and content $\mu$ is a sequence of $k$-bounded partitions $\emptyset = \nu^0 \subset \nu^1 \subset \nu^2 \subset \cdots \subset \nu^m = \lambda$ such that the skew diagrams $\nu^i / \nu^{i-1}$ are horizontal $k$-strips of size $\mu_i$. The number of semistandard $k$-tableaux of shape $\lambda$ and content $\mu$ is denoted $K^{(k)}_{\lambda, \mu}$. 

\begin{Definition}\label{def:schur}
The $k$-Schur functions $s^{(k)}_\lambda$ are the basis of $\symk$ uniquely defined by the $k$-Pieri rule: 
\[ h_i s^{(k)}_\lambda  = \sum_\mu s^{(k)}_\mu,\]  the sum over all partitions $\mu$ for which $\mu / \lambda$ is a horizontal $k$-strip of size $i$.
\end{Definition}

\begin{Remark}
This is not the way that $k$-Schur functions were originally defined. There are in fact many conjecturally equivalent definitions of $k$-Schur functions.
\end{Remark}

\begin{Corollary}\label{rem:htokschur}
As a consequence of the $k$-Pieri rule, \[h_\mu = \sum_{\lambda} K^{(k)}_{\lambda, \mu} s^{(k)}_\lambda .\]
\end{Corollary}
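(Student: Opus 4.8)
The plan is to mimic exactly the proof of Corollary~\ref{rem:schur}, which derived $h_\mu = \sum_\lambda K_{\lambda,\mu} s_\lambda$ from the ordinary Pieri rule by induction. First I would fix a $k$-bounded partition $\mu = (\mu_1, \mu_2, \dots, \mu_m)$ and induct on $m$, the number of parts (equivalently, on the number of Pieri factors). The base case $m = 0$ is $h_\emptyset = 1 = s^{(k)}_\emptyset$, which holds since $K^{(k)}_{\emptyset, \emptyset} = 1$ and there is a unique (empty) semistandard $k$-tableau of shape and content $\emptyset$.

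For the inductive step, write $\mu = (\mu_1, \dots, \mu_{m-1}, \mu_m)$ and set $\mu' = (\mu_1, \dots, \mu_{m-1})$, so that $h_\mu = h_{\mu'} \cdot h_{\mu_m}$. By the inductive hypothesis, $h_{\mu'} = \sum_\nu K^{(k)}_{\nu, \mu'} s^{(k)}_\nu$, where $\nu$ ranges over $k$-bounded partitions. Multiplying by $h_{\mu_m}$ and applying the $k$-Pieri rule of Definition~\ref{def:schur} to each term $h_{\mu_m} s^{(k)}_\nu$, I get
\[
h_\mu = \sum_\nu K^{(k)}_{\nu, \mu'} \sum_{\lambda} s^{(k)}_\lambda,
\]
where the inner sum is over all $k$-bounded $\lambda$ with $\lambda/\nu$ a horizontal $k$-strip of size $\mu_m$. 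Swapping the order of summation, the coefficient of $s^{(k)}_\lambda$ becomes $\sum_\nu K^{(k)}_{\nu, \mu'}$, the sum over all $\nu$ such that $\nu \subset \lambda$ and $\lambda/\nu$ is a horizontal $k$-strip of size $\mu_m$. By the definition of semistandard $k$-tableaux as chains of $k$-bounded partitions $\emptyset = \nu^0 \subset \cdots \subset \nu^m = \lambda$ with each $\nu^i/\nu^{i-1}$ a horizontal $k$-strip of size $\mu_i$, extending a $k$-tableau of shape $\nu$ and content $\mu'$ by one more horizontal $k$-strip $\lambda/\nu$ of size $\mu_m$ is precisely a $k$-tableau of shape $\lambda$ and content $\mu$; hence $\sum_\nu K^{(k)}_{\nu, \mu'} = K^{(k)}_{\lambda, \mu}$, which closes the induction.

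There is essentially no obstacle here: the argument is purely formal once the $k$-Pieri rule (Definition~\ref{def:schur}) and the chain description of semistandard $k$-tableaux are in hand, and it is word-for-word the classical argument with ``horizontal strip'' replaced by ``horizontal $k$-strip.'' The only point requiring the slightest care is that all intermediate shapes $\nu$ stay $k$-bounded, but this is automatic since the $k$-Pieri rule only ever produces $k$-bounded partitions and the inductive hypothesis only involves $k$-bounded $\nu$; one should also note that $h_{\mu_m} \in \symk$ requires $\mu_m \le k$, which holds because $\mu$ is $k$-bounded. I would therefore present this as a one-paragraph proof, exactly parallel to the ``this follows from induction and the Pieri rule'' remark for Corollary~\ref{rem:schur}, spelling out the double-counting identification $\sum_\nu K^{(k)}_{\nu,\mu'} = K^{(k)}_{\lambda,\mu}$ as the one line worth making explicit.
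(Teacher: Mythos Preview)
Your proposal is correct and matches the paper's approach exactly: the paper treats this corollary as immediate from the $k$-Pieri rule (it does not even include a separate proof environment, paralleling the one-line ``induction and the Pieri rule'' justification of Corollary~\ref{rem:schur}), and your write-up is simply the spelled-out version of that same induction.
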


\begin{Theorem}
As $k$ approaches infinity, \[ s_\lambda^{(k)} \to s_\lambda.\] 
\end{Theorem}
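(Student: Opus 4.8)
The plan is to compare the two Pieri rules directly. Both $s_\lambda$ and $s^{(k)}_\lambda$ are defined by a Pieri rule of the shape $h_i \cdot s_\lambda = \sum_\mu s_\mu$, summing over horizontal (respectively horizontal $k$-) strips $\mu/\lambda$ of size $i$. The key observation is that for a fixed partition $\lambda$, once $k$ is large enough (specifically $k \geq |\lambda| + i$, so that every partition $\mu$ with $\mu/\lambda$ a horizontal strip of size $i$ is $k$-bounded and the $\omega_k$-condition becomes vacuous), the notion of horizontal $k$-strip of size $i$ with lower shape $\lambda$ coincides with the ordinary notion of horizontal strip of size $i$. Indeed, $\omega_k$ fixes partitions contained in the $k$-staircase, and a vertical-strip condition on $\lambda^{\omega_k}/\mu^{\omega_k}$ is implied once both partitions are small relative to $k$; hence for $k \gg 0$ the two Pieri rules agree on $s_\lambda$.

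First I would make precise the statement that $s^{(k)}_\lambda \to s_\lambda$: interpreting each $s^{(k)}_\lambda$ as an element of $\sym$ via the inclusion $\symk \subset \sym$, the claim is that for every fixed $\lambda$ there exists $N$ such that $s^{(k)}_\lambda = s_\lambda$ for all $k \geq N$ (a stabilization statement, which is the natural reading of the limit). Then I would prove this by induction on $|\lambda|$. The base case $\lambda = \emptyset$ is immediate since $s^{(k)}_\emptyset = 1 = s_\emptyset$. For the inductive step, I would use Corollary \ref{rem:htokschur} together with Corollary \ref{rem:schur}: expand $h_\mu$ in the $k$-Schur basis and in the Schur basis, and compare the transition matrices $K^{(k)}_{\lambda,\mu}$ and $K_{\lambda,\mu}$. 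Since a semistandard $k$-tableau is a chain of horizontal $k$-strips, and since for $k$ large all the relevant horizontal $k$-strips coincide with horizontal strips, we get $K^{(k)}_{\lambda,\mu} = K_{\lambda,\mu}$ for all $\lambda,\mu$ of a fixed bounded size once $k$ is large. Triangularity of both matrices (with respect to dominance order) then forces $s^{(k)}_\lambda = s_\lambda$ for $k \gg 0$.

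Alternatively, and perhaps more cleanly, I would argue directly by induction on $|\lambda|$ using only the Pieri rules: choose a part, say write $\lambda$ by removing a horizontal strip, or more simply use the fact that the Pieri rule with $h_i$ and a one-row shape generates everything. Concretely, both $\{s_\lambda\}$ and $\{s^{(k)}_\lambda\}$ are the unique bases satisfying their respective Pieri rules with $s_\emptyset = 1$, so it suffices to check that for $k \geq |\lambda| + i$ the set of $\mu$ with $\mu/\lambda$ a horizontal $k$-strip of size $i$ equals the set of $\mu$ with $\mu/\lambda$ a horizontal strip of size $i$; then an easy induction (building up $s^{(k)}_\lambda$ from smaller shapes via the Pieri rule, all of whose intermediate partitions have size $\leq |\lambda|$) shows the two bases agree on all partitions of bounded size.

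The main obstacle is the lemma about the involution $\omega_k$: one must show that for partitions $\mu \subset \lambda$ with $|\lambda|$ small relative to $k$, the condition that $\lambda^{\omega_k}/\mu^{\omega_k}$ is a vertical strip is automatically satisfied whenever $\lambda/\mu$ is a horizontal strip. Since the paper does not give the definition of $\omega_k$, I would invoke the known property from \cite{LLM03} that $\omega_k$ restricts to the identity on partitions fitting inside the $k \times \infty$ staircase $(k, k-1, \dots, 1)$ region (equivalently, partitions with no part exceeding $k - (\text{number of parts}) + 1$, or more simply all partitions of size at most $k$), so that for $k \geq |\lambda|$ we have $\lambda^{\omega_k} = \lambda$ and $\mu^{\omega_k} = \mu$, and the vertical-strip condition on a horizontal strip of a single box in each column is vacuous. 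Everything else is a routine induction; the convergence statement itself should then be recorded as the stabilization $s^{(k)}_\lambda = s_\lambda$ for $k \geq |\lambda|$.
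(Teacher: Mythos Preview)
Your approach is essentially the paper's: both argue that the defining Pieri rules coincide once the horizontal $k$-strip condition reduces to the ordinary horizontal-strip condition for large $k$. The paper says this in one sentence; you flesh out the stabilization and induction, which is fine.

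There is, however, a genuine slip in your key lemma. The involution $\omega_k$ does \emph{not} restrict to the identity on small partitions; on partitions whose hook lengths are all at most $k$ (in particular, whenever $|\lambda|\le k$) it restricts to ordinary \emph{conjugation}, i.e.\ $\lambda^{\omega_k}=\lambda'$. If $\omega_k$ were the identity, the vertical-strip condition on $\lambda^{\omega_k}/\mu^{\omega_k}=\lambda/\mu$ would force $\lambda/\mu$ to be simultaneously a horizontal and a vertical strip, which is not vacuous. With the correct property the argument goes through immediately: if $\lambda/\mu$ is a horizontal strip and $k\ge|\lambda|$, then $\lambda^{\omega_k}/\mu^{\omega_k}=\lambda'/\mu'$ is automatically a vertical strip, so the extra condition in the definition of horizontal $k$-strip is indeed vacuous. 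Replace ``identity'' by ``conjugation'' and your proof is complete and matches the paper's.
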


\begin{proof}
This follows from the definition, since as $k$ approaches infinity, the condition of a horizontal $k$-strip becomes the condition for a horizontal strip.
\end{proof}

\begin{Definition}
The dual $k$-Schur functions $\{\dualkschur\}_{\lambda \in \Pk}$ are the basis of $\symkk$ which are dual to the $k$-Schur functions.
\end{Definition}

\begin{Theorem}[Lam \cite{Lam06}, Lapointe-Morse \cite{LM08}]
The dual $k$-Schur functions expand in the monomial basis as: \[\dualkschur = \sum_{\mu} K^{(k)}_{\lambda, \mu} m_\mu.\]
\end{Theorem}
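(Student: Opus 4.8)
The plan is to mimic verbatim the argument given for ordinary Schur functions at the end of Section~\ref{sec:schur}, with the self-duality of $\sym$ replaced by the duality between $\symkk$ and $\symk$. Since $\{m_\mu\}_{\mu\in\Pk}$ is a basis of $\symkk$, I would start by writing $\dualkschur = \sum_{\mu} c_{\lambda,\mu}\, m_\mu$ for scalars $c_{\lambda,\mu}\in\mathbb{Q}$; the task is then to show $c_{\lambda,\mu} = K^{(k)}_{\lambda,\mu}$.

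The first step is to recover each coefficient as a pairing value. Because $\{m_\mu\}_{\mu\in\Pk}$ is dual to $\{h_\mu\}_{\mu\in\Pk}$ under $\langle\cdot,\cdot\rangle_{\Pk}$, pairing $\dualkschur$ against $h_\mu$ extracts precisely the $m_\mu$-coefficient, so $c_{\lambda,\mu} = \langle \dualkschur, h_\mu\rangle_{\Pk}$. The second step is to expand $h_\mu$ in the $k$-Schur basis via Corollary~\ref{rem:htokschur}, namely $h_\mu = \sum_{\nu} K^{(k)}_{\nu,\mu}\, s^{(k)}_\nu$, and then apply bilinearity together with the defining property of the dual $k$-Schur functions, $\langle \dualkschur, s^{(k)}_\nu\rangle_{\Pk} = \delta_{\lambda,\nu}$. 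This gives $c_{\lambda,\mu} = \big\langle \dualkschur, \sum_{\nu} K^{(k)}_{\nu,\mu}\, s^{(k)}_\nu\big\rangle_{\Pk} = K^{(k)}_{\lambda,\mu}$, which is the assertion.

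There is no genuine obstacle here; the only points needing a moment of care are bookkeeping ones. One should note that $\langle\cdot,\cdot\rangle_{\Pk}$ is well defined on the quotient $\symkk$ and restricts to a perfect pairing between the $k$-bounded spans (so that the dual $k$-Schur basis exists and the coefficient-extraction step is legitimate), and that all the sums appearing are finite because $h_\mu$, $\dualkschur$, and each $s^{(k)}_\nu$ are homogeneous, so $\nu$ ranges only over $k$-bounded partitions of $|\mu|$. Granting these, the computation is identical to the symmetric-function case and requires no further input.
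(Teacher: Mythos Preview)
Your proposal is correct and follows exactly the paper's argument: pair $\dualkschur$ against $h_\mu$, expand $h_\mu$ in $k$-Schur functions via Corollary~\ref{rem:htokschur}, and use the defining duality $\langle \dualkschur, s^{(k)}_\nu\rangle_{\Pk} = \delta_{\lambda,\nu}$ to extract $K^{(k)}_{\lambda,\mu}$ as the $m_\mu$-coefficient. The paper's proof is just a one-line version of what you wrote, with your bookkeeping remarks left implicit.
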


\begin{proof}
Using  Corollary \ref{rem:htokschur}, \[ \langle \dualkschur, h_\mu \rangle_{\Pk} = \langle \dualkschur, \sum_{\mu} K^{(k)}_{\nu, \mu} s^{(k)}_\nu \rangle_{\Pk} = K^{(k)}_{\lambda, \mu},\] so the coefficient of $m_\mu$ in $\dualkschur$ is $K^{(k)}_{\lambda, \mu}$.
\end{proof}

\section{quasi-symmetric Schur functions and the dual Hopf algebras $\Qsym$ and $\Nsym$}\label{sec:quasi}

In this section, we recall basic results concerning quasi-symmetric Schur functions. We refer the reader to \cite{HLMvW, HLMvW2, BLvW}.

We let $\C$ denote the set of all compositions.
Let $\{H_i\}_{i \geq 0}$ denote a collection of generators with no relations, and construct the algebra
 $\Nsym = \mathbb{Q}[H_1, H_2, \dots]$ (we refer the reader to \cite{GKLLRT}). A natural basis for $\Nsym$ is $H_\alpha := H_{\alpha_1} H_{\alpha_2} \cdots H_{\alpha_m}$ over all compositions $\alpha = [\alpha_1, \alpha_2, \dots, \alpha_m] \in \C$.

We let \[M_\alpha = \sum_{i_1 < i_2 < \cdots < i_m} x_{i_1}^{\alpha_1} x_{i_2}^{\alpha_2} \cdots x_{i_m}^{\alpha_m}\] denote the monomial quasi-symmetric function. The algebra
$\Qsym$ can be defined as being the algebra with basis $\{M_\alpha\}_{\alpha\in\C}$. The algebra $\Qsym$ was first developed in \cite{G84, GR, Stan84}, and its study has since flourished.

\begin{Theorem} 
The Hopf algebras $\Qsym$ and $\Nsym$  are dual. Under this duality, the basis $\{M_\alpha\}_{\alpha \in \C}$ is dual to $\{H_\alpha\}_{\alpha \in \C}$.
\end{Theorem}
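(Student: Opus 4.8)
The plan is to establish the duality of $\Qsym$ and $\Nsym$ by producing an explicit nondegenerate pairing and checking it respects both the algebra and coalgebra structures. First I would recall the coproduct on $\Nsym$: since $\Nsym = \mathbb{Q}[H_1, H_2, \dots]$ is free, it suffices to declare $\Delta(H_i) = \sum_{j=0}^{i} H_j \otimes H_{i-j}$ (with $H_0 = 1$) and extend multiplicatively. Dually, the product on $\Qsym$ is the one it inherits as a subalgebra of the power series ring, which on the monomial basis is given by the well-known overlapping-shuffle (quasi-shuffle) formula for $M_\alpha M_\beta$. I would then define the pairing $\langle \cdot, \cdot \rangle \colon \Qsym \times \Nsym \to \mathbb{Q}$ on basis elements by $\langle M_\alpha, H_\beta \rangle = \delta_{\alpha, \beta}$ and verify it is a Hopf pairing.

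The key steps, in order: (1) Check that $\langle M_\alpha, H_\beta H_\gamma \rangle = \langle \Delta M_\alpha, H_\beta \otimes H_\gamma \rangle$, i.e. that the product structure constants of $\Nsym$ in the $H$-basis match the coproduct structure constants of $\Qsym$ in the $M$-basis. Since $H_\beta H_\gamma = H_{\beta \cdot \gamma}$ is just concatenation, this amounts to computing $\Delta M_\alpha$ and reading off that the coefficient of $M_\beta \otimes M_\gamma$ is $1$ exactly when $\alpha = \beta \cdot \gamma$ and $0$ otherwise — a direct computation from the definition of $M_\alpha$ by splitting the index set $\{i_1 < \dots < i_m\}$ at a threshold. (2) Dually, check $\langle M_\alpha M_\beta, H_\gamma \rangle = \langle M_\alpha \otimes M_\beta, \Delta H_\gamma \rangle$; here $\Delta H_\gamma = \sum H_{\gamma'} \otimes H_{\gamma''}$ over the deconcatenations of $\gamma$ coming from expanding $\prod_i \Delta(H_{\gamma_i})$, and one matches this against the quasi-shuffle expansion of $M_\alpha M_\beta$. (3) Confirm compatibility of units, counits, and (if one wants the full Hopf statement) antipodes, which is routine. (4) Nondegeneracy is immediate since the pairing is diagonal on dual bases; this is exactly the content of the final sentence of the statement.

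The main obstacle is bookkeeping step (2): one must carefully verify that the overlapping-shuffle product on monomial quasi-symmetric functions is precisely dual to concatenation-with-merging on $\Nsym$'s generators — equivalently that summing $\langle M_\alpha \otimes M_\beta, \Delta H_\gamma\rangle$ reproduces the multiplicity of $H_\gamma$ appearing when $M_\alpha M_\beta$ is expanded, then using the diagonal pairing of the $H$-basis against the $M$-basis. This is where the combinatorics of interleaving compositions (allowing parts to be added together where they overlap) has to be matched term-by-term with the combinatorics of splitting each generator $H_{\gamma_i}$ as $\sum_j H_j \otimes H_{\gamma_i - j}$. Everything else — freeness of $\Nsym$, the explicit formulas for $M_\alpha$, and the elementary splitting argument in step (1) — is standard and follows the same template already used in the symmetric and $k$-bounded cases earlier in the paper, with $\langle M_\alpha, H_\beta\rangle = \delta_{\alpha,\beta}$ playing the role that $\langle m_\lambda, h_\mu\rangle_\P = \delta_{\lambda,\mu}$ played there.
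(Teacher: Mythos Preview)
Your outline is a correct and standard way to establish the $\Qsym$/$\Nsym$ duality directly: define the pairing diagonally on $M_\alpha$ versus $H_\beta$, verify the two Hopf compatibility identities, and conclude nondegeneracy from diagonality. The combinatorial matching you identify in step (2) is exactly right --- expanding $\Delta H_\gamma = \prod_i \sum_{j_i} H_{j_i}\otimes H_{\gamma_i - j_i}$ and reading off the coefficient of $H_\alpha\otimes H_\beta$ gives precisely the number of quasi-shuffles of $\alpha$ and $\beta$ yielding $\gamma$, since each $\gamma_i$ is either entirely from $\alpha$ (when $j_i=\gamma_i$), entirely from $\beta$ (when $j_i=0$), or a merged sum (when $0<j_i<\gamma_i$). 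One terminological quibble: you call the terms of $\Delta H_\gamma$ ``deconcatenations,'' which is misleading since genuine deconcatenation would only split $\gamma$ at a position rather than split each part; but you immediately clarify what you mean, so there is no real error.

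That said, the paper itself gives \emph{no} proof of this theorem. It is stated as background, with the duality of $\Qsym$ and $\Nsym$ taken as a known structural fact from the foundational references (Gessel, Malvenuto--Reutenauer, and Gelfand--Krob--Lascoux--Leclerc--Retakh--Thibon). So rather than differing in approach, you have supplied a proof where the paper simply invokes the literature; your argument is essentially the one found in those sources.
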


Let $\langle \cdot, \cdot \rangle_{\C}: \Qsym \times \Nsym \rightarrow \mathbb{Q}$ denote the pairing defined by $\langle M_\alpha, H_\beta \rangle_{\C} = \delta_{\alpha, \beta}$. 

\subsection{Horizontal composition strips and quasi-symmetric Schur functions}

We now introduce some combinatorics to describe the non-symmetric Pieri rule which was developed in \cite{BLvW}.

There is an ordering of compositions defined in \cite{HLMvW}. It can be described by the covering relation $\beta \lessdot_{\C} \alpha$ whenever either $\beta$ is $\alpha$ prepended with a $1$, or $\alpha$ is $\beta$ with the first part of size $m$ in $\beta$ replaced in $\alpha$ by $m+1$ for some $m \in \mathbb{N}$.

For two compositions $\beta, \alpha$ with $\beta \lessdot_{\C} \alpha$, $\alpha //  \beta$ denotes the collection of cells in the diagram of $\alpha$ which are not in the diagram of shape $\beta$, where $\beta$ is thought of as sitting in the lower left corner of $\alpha$.
We say that $\alpha // \beta$ is a horizontal composition strip if $\beta \lessdot_{\C} \alpha$ and $\alpha // \beta$ has no two cells in the same column.

A semistandard composition tableaux of shape $\alpha$ and content $\beta$ is a sequence of compositions $\emptyset = \gamma^0 \lessdot_{\C} \gamma^1 \lessdot_{\C} \gamma^2 \lessdot_{\C} \cdots \lessdot_{\C} \gamma^m = \alpha$ such that the skew diagrams $\gamma^i // \gamma^{i-1}$ are horizontal composition strips of size $\beta_i$. The number of semistandard composition tableaux of shape $\alpha$ and content $\beta$ is denoted $\widetilde{K}_{\alpha, \beta}$. 

\begin{Definition}[\cite{BLvW}, Corollary 3.8]\label{def:ncschur}
The non-commutative Schur functions $\NS$ are the basis of $\Nsym$ uniquely defined by the non-commutative Pieri rule: 
\[ H_i \NS  = \sum_\beta \NS[\beta],\] where the sum is over all compositions $\beta$ for which $\beta // \alpha$ is a horizontal composition strip of size $i$.
\end{Definition}

\begin{Corollary}\label{rem:htoncschur}
As a consequence of the non-commutative Pieri rule, \[H_\beta = \sum_{\lambda} \widetilde{K}_{\alpha, \beta} \NS .\]
\end{Corollary}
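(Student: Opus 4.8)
The plan is to follow the pattern of Corollaries \ref{rem:schur} and \ref{rem:htokschur} and argue by induction on $m$, the number of parts of $\beta = [\beta_1, \dots, \beta_m]$, using the non-commutative Pieri rule of Definition \ref{def:ncschur} together with the fact that $\{\NS[\alpha]\}_{\alpha \in \C}$ is a basis of $\Nsym$. When $m = 0$ we have $H_\emptyset = 1 = \NS[\emptyset]$ and $\widetilde{K}_{\alpha, \emptyset} = \delta_{\alpha, \emptyset}$, so the identity holds; when $m = 1$ the non-commutative Pieri rule applied to $\NS[\emptyset] = 1$ gives $H_{\beta_1} = H_{\beta_1}\NS[\emptyset] = \sum_\alpha \NS[\alpha]$, the sum over all $\alpha$ such that $\alpha // \emptyset$ is a horizontal composition strip of size $\beta_1$, which is exactly $\sum_\alpha \widetilde{K}_{\alpha, [\beta_1]}\NS[\alpha]$, since for a one-part content a semistandard composition tableau is nothing but a single horizontal composition strip.

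For the inductive step, split off the part of $\beta$ lying on the side on which the Pieri rule of Definition \ref{def:ncschur} multiplies by $H_i$, writing $H_\beta$ as a product of $H_{\beta'}$ (with $\beta'$ a composition with $m - 1$ parts) and one factor $H_{\beta_j}$. Apply the inductive hypothesis to expand $H_{\beta'} = \sum_\delta \widetilde{K}_{\delta, \beta'}\NS[\delta]$, apply the non-commutative Pieri rule to each term, and interchange the two summations. The coefficient of $\NS[\alpha]$ that results counts the ways to choose a chain $\emptyset = \gamma^0 \lessdot_{\C} \cdots \lessdot_{\C} \gamma^{m-1} = \delta$ of horizontal composition strips realizing the content $\beta'$, together with one further horizontal composition strip of size $\beta_j$ joining $\delta$ to $\alpha$. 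By the recursive structure of semistandard composition tableaux — removing the outermost (respectively innermost) horizontal composition strip of a tableau of shape $\alpha$ and content $\beta$ leaves a tableau whose content is $\beta$ with that part deleted — this coefficient is precisely $\widetilde{K}_{\alpha, \beta}$, and comparing coefficients against the basis $\{\NS[\alpha]\}$ finishes the induction.

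The only point that is not purely formal — and the one genuinely new feature compared to the symmetric-function argument of Corollary \ref{rem:schur} — is that $\Nsym$ is non-commutative, so the factors $H_{\beta_i}$ cannot be reordered. One must therefore split $H_\beta$ at the correct end, the one compatible with the side on which the Pieri rule acts, and check that, when the Pieri rule is iterated, the prescribed strip sizes $\beta_1, \dots, \beta_m$ appear along the chain $\gamma^0 \lessdot_{\C} \cdots \lessdot_{\C} \gamma^m$ in exactly the order demanded by the definition of $\widetilde{K}_{\alpha, \beta}$; this is a matter of unwinding the definitions of $\lessdot_{\C}$, of horizontal composition strips, and of semistandard composition tableaux recalled above.
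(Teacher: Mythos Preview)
Your proposal is correct and follows exactly the approach the paper intends: no explicit proof is given for this corollary, but by analogy with Corollary~\ref{rem:schur} (whose proof reads ``This follows from induction and the Pieri rule''), the argument is precisely the induction on the length of $\beta$ that you carry out. Your attention to the non-commutativity of $\Nsym$ and the need to split $H_\beta$ on the side compatible with the Pieri rule is a worthwhile clarification of a point the paper leaves implicit.
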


\begin{Definition}
The quasi-symmetric Schur functions $\{\QS\}_{\alpha \in \C}$ are the basis of $\Qsym$ which are dual to the basis of non-commutative Schur functions.
\end{Definition}

\begin{Theorem}[\cite{HLMvW}, Theorem 6.1]
The quasi-symmetric Schur functions expand in the quasi-symmetric monomial basis as: \[\QS = \sum_{\beta} \widetilde{K}_{\alpha, \beta} M_\beta.\]
\end{Theorem}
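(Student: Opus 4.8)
The plan is to mimic, almost verbatim, the proofs of the analogous statements for Schur functions (Theorem using Corollary \ref{rem:schur}) and for dual $k$-Schur functions (the theorem using Corollary \ref{rem:htokschur}). The whole argument is formal: it extracts the coefficient of $M_\beta$ in $\QS$ by pairing against $H_\beta$ and then rewrites $H_\beta$ in the non-commutative Schur basis.

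First I would observe that, since the pairing is defined by $\langle M_\alpha, H_\beta \rangle_{\C} = \delta_{\alpha,\beta}$, for any $f \in \Qsym$ the coefficient of $M_\beta$ in $f$ is exactly $\langle f, H_\beta \rangle_{\C}$. Hence it suffices to compute $\langle \QS, H_\beta \rangle_{\C}$ for every composition $\beta$. Next I would expand $H_\beta$ using Corollary \ref{rem:htoncschur}, namely
\[
H_\beta = \sum_{\nu} \widetilde{K}_{\nu,\beta}\, \NS[\nu],
\]
and then use bilinearity together with the defining property that $\{\QS\}$ is the dual basis to $\{\NS\}$, i.e. $\langle \QS, \NS[\nu] \rangle_{\C} = \delta_{\alpha,\nu}$. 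This gives
\[
\langle \QS, H_\beta \rangle_{\C} = \sum_{\nu} \widetilde{K}_{\nu,\beta}\, \langle \QS, \NS[\nu] \rangle_{\C} = \widetilde{K}_{\alpha,\beta},
\]
which is precisely the claimed coefficient of $M_\beta$ in $\QS$.

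I do not expect any real obstacle here; the statement is a direct consequence of the duality between $\Qsym$ and $\Nsym$ and of the triangular change of basis recorded in Corollary \ref{rem:htoncschur}. The one place that merits a moment's care is the direction of the duality: the functional extracting the $M_\beta$-coefficient is pairing against $H_\beta$, not against $M_\beta$, exactly as in the symmetric and $k$-Schur cases above, so one must keep the roles of the two dual bases straight when substituting. No convergence or finiteness issues arise, since each $\QS$ is homogeneous of degree $|\alpha|$ and hence a finite $\mathbb{Q}$-linear combination of the $M_\beta$.
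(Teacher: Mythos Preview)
Your proposal is correct and matches the paper's proof essentially line for line: the paper also computes $\langle \QS, H_\beta \rangle_{\C}$ by substituting the expansion of $H_\beta$ from Corollary~\ref{rem:htoncschur} and using the duality $\langle \QS, \NS[\gamma] \rangle_{\C} = \delta_{\alpha,\gamma}$ to read off the coefficient $\widetilde{K}_{\alpha,\beta}$.
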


\begin{Remark}
In \cite{HLMvW} the authors use the notation $S_\alpha$ for a quasi-symmetric Schur function. To make a better analogy, we use $S_\alpha$ to denote the corresponding dual basis element in $\Nsym$.
\end{Remark}

\begin{proof}
Using  Corollary \ref{rem:htoncschur}, \[ \langle \QS, H_\beta \rangle_{\C} = \langle \QS, \sum_{\gamma} \widetilde{K}_{\gamma, \beta} \NS[\gamma] \rangle_{\C} = \widetilde{K}_{\alpha, \beta},\] so the coefficient of $M_\beta$ in $\QS$ is $\widetilde{K}_{\alpha, \beta}$.
\end{proof}

There is a map from compositions to partitions, denoted $\lambda$ by abuse of notation, which sends a composition $\alpha$ to the sorted partition $\lambda(\alpha)$, and there is a corresponding projection
 $\chi: \Nsym \to \sym$, defined by sending $H_\alpha \to h_{\lambda(\alpha)}$. An amazing fact about non-commutative Schur functions is that they are lifts of the ordinary Schur functions under $\chi$.

\begin{Theorem}\label{projection} For a composition $\alpha$, 
\[ \chi(\NS) = s_{\lambda(\alpha)}.\]
\end{Theorem}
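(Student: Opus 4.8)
The plan is to prove $\chi(\NS) = s_{\lambda(\alpha)}$ by induction on $|\alpha|$, transporting the classical Pieri rule for Schur functions through $\chi$. First I would record that $\chi$ is an algebra homomorphism: on generators it sends $H_i \mapsto h_i$, and since $h_{\lambda(\alpha)} = h_{\alpha_1}\cdots h_{\alpha_m}$ depends only on the multiset of parts of $\alpha$, we get $\chi(H_\alpha H_\beta) = h_{\lambda(\alpha\cdot\beta)} = h_{\lambda(\alpha)}h_{\lambda(\beta)} = \chi(H_\alpha)\chi(H_\beta)$, where $\alpha\cdot\beta$ denotes concatenation. The base case is $\alpha = \emptyset$, where $\NS[\emptyset] = 1 = s_\emptyset$.

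For the inductive step, take $\alpha$ of size $n \ge 1$ and let $\gamma$ be $\alpha$ with its first part deleted, so $|\gamma| = n - \alpha_1 < n$. One checks from the covering relation $\lessdot_\C$ that $\alpha$ occurs in the non-commutative Pieri expansion $H_{\alpha_1}\,\NS[\gamma] = \sum_\beta \NS[\beta]$, where the sum is over all compositions $\beta$ with $\beta // \gamma$ a horizontal composition strip of size $\alpha_1$, and $\alpha$ is the term obtained by adjoining the entire first row of $\alpha$. Applying $\chi$ and using the inductive hypothesis $\chi(\NS[\gamma]) = s_{\lambda(\gamma)}$ together with multiplicativity, the left side becomes $h_{\alpha_1} s_{\lambda(\gamma)} = \sum_\mu s_\mu$, the classical Pieri sum over partitions $\mu$ with $\mu/\lambda(\gamma)$ a horizontal strip of size $\alpha_1$. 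Hence $\sum_\beta \chi(\NS[\beta]) = \sum_\mu s_\mu$, and the identity I need to finish is the following.

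\emph{Key Lemma.} For a composition $\gamma$ and an integer $i \ge 1$, the map $\beta \mapsto \lambda(\beta)$ is a bijection from $\{\beta : \beta // \gamma \text{ a horizontal composition strip of size } i\}$ onto $\{\mu : \mu/\lambda(\gamma) \text{ a horizontal strip of size } i\}$. For $i = 1$ this is transparent: prepending a $1$ to $\gamma$ matches adjoining a new length-$1$ row to $\lambda(\gamma)$, while incrementing the first part of size $m$ in $\gamma$ matches extending the unique addable (topmost) row of length $m$ in the partition $\lambda(\gamma)$; the general case follows by composing such single steps and checking that the column-disjointness conditions correspond, which is essentially the content of \cite{BLvW}. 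Granting the Lemma, $\sum_\beta \chi(\NS[\beta]) = \sum_\mu s_\mu = \sum_\beta s_{\lambda(\beta)}$, and since the $\lambda(\beta)$ occurring are pairwise distinct, comparing coefficients of the basis $\{s_\mu\}$ forces $\chi(\NS[\beta]) = s_{\lambda(\beta)}$ for every $\beta$ in this sum. Because the remaining $\beta$ also have size $n$, the cleanest bookkeeping is to run the induction on the pair $(|\alpha|, \alpha)$ with $\alpha$ ordered by $\le_\C$ within a fixed size, choosing $\gamma$ so that $\alpha$ is the $\le_\C$-largest term of its Pieri sum; alternatively one avoids the secondary induction by instead checking, via Corollaries~\ref{rem:htoncschur} and~\ref{rem:schur}, that the linear map $\NS[\alpha] \mapsto s_{\lambda(\alpha)}$ agrees with $\chi$ on the basis $\{H_\beta\}$, which reduces to the identity $\sum_{\lambda(\alpha) = \nu} \widetilde K_{\alpha,\beta} = K_{\nu, \lambda(\beta)}$ and follows from the Key Lemma by an induction on $|\beta|$ that peels off the last part of $\beta$ and matches the two Pieri recursions. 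In every version the Key Lemma is the sole real obstacle; the rest is the formal transport of one Pieri rule to another.
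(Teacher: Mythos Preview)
Your proposal is correct and follows essentially the same route as the paper: induction via the non-commutative Pieri rule (Definition~\ref{def:ncschur}), transported through the algebra map $\chi$ to the classical Pieri rule for Schur functions. You are simply more explicit than the paper's one-line proof, which records only the implication ``$\alpha//\beta$ a horizontal composition strip $\Rightarrow$ $\lambda(\alpha)/\lambda(\beta)$ a horizontal strip'' and leaves implicit both the bijectivity you isolate as the Key Lemma and the secondary ordering needed to peel off a single $\NS[\beta]$ from the Pieri sum.
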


\begin{proof}
This follows from induction, Definition \ref{def:ncschur} and the fact that $\alpha//\beta$ being a horizontal composition strip implies that $\lambda(\alpha)/ \lambda(\beta)$ is a horizontal strip.
\end{proof}

The dual identity to Theorem \ref{projection} is a decomposition of a Schur function into quasi-symmetric Schur functions.

\begin{Theorem}\label{decompose}
For a partition $\lambda$, \[ s_\lambda = \sum_{\substack{\alpha \\ \lambda(\alpha) = \lambda}} \QS.\]
\end{Theorem}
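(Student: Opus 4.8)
The plan is to dualize Theorem \ref{projection} through the pairing $\langle\cdot,\cdot\rangle_\C$, in the same spirit as the monomial-expansion results proved earlier in this paper dualize the various Pieri rules. First I would identify the map adjoint to the projection $\chi\colon\Nsym\to\sym$. Write $\iota\colon\sym\to\Qsym$ for this adjoint, characterized by $\langle \iota(f),g\rangle_\C=\langle f,\chi(g)\rangle_\P$ for all $f\in\sym$ and $g\in\Nsym$. Testing on bases gives $\langle\iota(m_\lambda),H_\beta\rangle_\C=\langle m_\lambda,\chi(H_\beta)\rangle_\P=\langle m_\lambda,h_{\lambda(\beta)}\rangle_\P=\delta_{\lambda,\lambda(\beta)}$, so $\iota(m_\lambda)=\sum_{\beta:\lambda(\beta)=\lambda}M_\beta$. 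But the right-hand side is exactly the expansion of $m_\lambda$ regarded as an element of $\Qsym$, so $\iota$ is simply the standard inclusion $\sym\hookrightarrow\Qsym$.

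Next, since $\{\QS[\alpha]\}$ and $\{\NS[\alpha]\}$ are dual bases, the coefficient of $\QS[\alpha]$ in any $F\in\Qsym$ equals $\langle F,\NS[\alpha]\rangle_\C$. Applying this with $F=s_\lambda$ (viewed inside $\Qsym$ via $\iota$) and combining adjointness with Theorem \ref{projection} and the self-duality of Schur functions (Theorem \ref{thm:selfdual}),
\[
\langle s_\lambda,\NS[\alpha]\rangle_\C=\langle\iota(s_\lambda),\NS[\alpha]\rangle_\C=\langle s_\lambda,\chi(\NS[\alpha])\rangle_\P=\langle s_\lambda,s_{\lambda(\alpha)}\rangle_\P=\delta_{\lambda,\lambda(\alpha)}.
\]
Summing over all compositions $\alpha$ then yields $s_\lambda=\sum_{\alpha:\lambda(\alpha)=\lambda}\QS[\alpha]$, which is the claim.

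I expect the only real subtlety to be the bookkeeping between the two pairings $\langle\cdot,\cdot\rangle_\P$ and $\langle\cdot,\cdot\rangle_\C$ and the verification that $\chi$ and the inclusion $\sym\hookrightarrow\Qsym$ are genuinely adjoint; once that is in place the result is a one-line duality computation. An alternative, purely combinatorial route would substitute the known monomial expansions of both sides, namely $s_\lambda=\sum_\mu K_{\lambda,\mu}m_\mu$, $\QS[\alpha]=\sum_\beta\widetilde{K}_{\alpha,\beta}M_\beta$, and $m_\mu=\sum_{\beta:\lambda(\beta)=\mu}M_\beta$, reducing everything to the identity $\sum_{\alpha:\lambda(\alpha)=\lambda}\widetilde{K}_{\alpha,\beta}=K_{\lambda,\lambda(\beta)}$; this is true but requires the tableau bijection of \cite{HLMvW}, so the duality argument is cleaner and self-contained given what has already been established here.
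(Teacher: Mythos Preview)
Your proof is correct and follows essentially the same approach as the paper: both compute $\langle s_\lambda,\NS[\alpha]\rangle_\C=\langle s_\lambda,\chi(\NS[\alpha])\rangle_\P=\langle s_\lambda,s_{\lambda(\alpha)}\rangle_\P=\delta_{\lambda,\lambda(\alpha)}$ using Theorem~\ref{projection} and the self-duality of Schur functions. The only difference is cosmetic---you spell out that the adjoint of $\chi$ is the inclusion $\sym\hookrightarrow\Qsym$ by testing on bases, whereas the paper simply asserts the compatibility $\langle f,g\rangle_\C=\langle f,\chi(g)\rangle_\P$ for $f\in\sym$ without further justification.
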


\begin{proof}
We note that if $f \in \Qsym$ is actually in $\sym$ and $g \in \Nsym$ then $\langle f, g \rangle_{\C} = \langle f, \chi(g) \rangle_{\P}$.
We can compute $\langle s_\lambda, \NS \rangle_{\C} = \langle s_\lambda, s_{\lambda(\alpha)}\rangle_{\P} = \delta_{\lambda, \lambda(\alpha)}$, so the quasi-symmetric $\QS$ function appears in $s_\lambda$ with coefficient $\delta_{\lambda, \lambda(\alpha)}$.
\end{proof}

\section{quasi-symmetric affine Schur functions and the dual Hopf algebras $\Qsymk$ and $\Nsymk$}\label{sec:kquasi}

We let $\Ck$ denote the set of all $k$-bounded compositions. These are compositions where no part is bigger than $k$.

We define the algebra
 $\Nsymk = \mathbb{Q}[H_1, H_2, \dots, H_k] \subset \Nsym$. A natural basis for $\Nsymk$ is $H_\alpha := H_{\alpha_1} H_{\alpha_2} \cdots H_{\alpha_m}$ over all $k$-bounded compositions $\alpha = [\alpha_1, \alpha_2, \dots, \alpha_m] \in \Ck$.

We let
$\Qsymk$ denote the algebra $\Qsym /\langle M_\alpha : \alpha \not \in \Ck\rangle$. A natural basis for $\Qsym_k$ is $\{M_\alpha\}_{\alpha\in\Ck}$.

\begin{Theorem} 
The Hopf algebras $\Qsymk$ and $\Nsymk$  are dual. Under this duality, the basis $\{M_\alpha\}_{\alpha \in \Ck}$ is dual to $\{H_\alpha\}_{\alpha \in \Ck}$.
\end{Theorem}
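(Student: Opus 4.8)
The plan is to follow verbatim the pattern used for $\symkk$ and $\symk$: exhibit $\Nsymk$ as a sub-Hopf-algebra of $\Nsym$ and $\Qsymk$ as the quotient Hopf algebra dual to that inclusion, so that the Hopf pairing $\langle \cdot, \cdot \rangle_\C$ between $\Qsym$ and $\Nsym$ descends to the claimed perfect pairing $\langle \cdot, \cdot \rangle_{\Ck}$, which I will denote by $\langle \cdot, \cdot \rangle_{\Ck}$.

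First I would verify that $\Nsymk = \mathbb{Q}[H_1,\dots,H_k]$ really is a sub-Hopf-algebra of $\Nsym$. It is a subalgebra by construction; since $\Delta(H_i)=\sum_{j=0}^{i}H_j\otimes H_{i-j}$ lies in $\Nsymk\otimes\Nsymk$ for every $i\le k$, and since the counit and the antipode of $\Nsym$ preserve $\Nsymk$ (the antipode sends $H_n$ to a signed sum of the $H_\gamma$ with $\gamma$ a composition of $n$, all of which are $k$-bounded once $n\le k$), it is closed under all the Hopf operations.

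Next I would pass to graded duals. Restriction of functionals gives a map $r\colon\Qsym\cong\Nsym^{\circ}\to\Nsymk^{\circ}$, $r(f)=\langle f,\,\cdot\,\rangle_\C\big|_{\Nsymk}$, which is a Hopf-algebra morphism because it is dual to the Hopf inclusion $\Nsymk\hookrightarrow\Nsym$. Since $\{M_\alpha\}_{\alpha\in\C}$ is dual to $\{H_\alpha\}_{\alpha\in\C}$ and $\{H_\beta\}_{\beta\in\Ck}$ is a basis of $\Nsymk$, we read off $\langle M_\alpha,H_\beta\rangle_\C=\delta_{\alpha,\beta}$ for $\beta\in\Ck$; hence $r(M_\alpha)$ is the basis vector dual to $H_\alpha$ when $\alpha\in\Ck$ and $r(M_\alpha)=0$ when $\alpha\notin\Ck$. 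Thus $r$ is onto with kernel precisely the span of $\{M_\alpha:\alpha\notin\Ck\}$, so it induces a Hopf-algebra isomorphism $\Qsymk=\Qsym/\langle M_\alpha:\alpha\notin\Ck\rangle\xrightarrow{\ \sim\ }\Nsymk^{\circ}$ carrying the class of $M_\alpha$ to the functional dual to $H_\alpha$. Transporting the pairing back along this isomorphism yields the perfect Hopf pairing $\langle M_\alpha,H_\beta\rangle_{\Ck}=\langle M_\alpha,H_\beta\rangle_\C=\delta_{\alpha,\beta}$, which is exactly the statement; well-definedness on the quotient is automatic since $\langle M_\alpha,H_\beta\rangle_\C=0$ whenever $\alpha\notin\Ck$ and $\beta\in\Ck$.

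The only genuinely non-formal point, and hence where I expect the actual work to lie, is the first step: that the span of the non-$k$-bounded $M_\alpha$ is a Hopf ideal of $\Qsym$, equivalently that $\Nsymk$ is stable under $\Delta$ and $S$ inside $\Nsym$. Once that is in hand, surjectivity of $r$, the computation of its kernel, and compatibility with products and coproducts are all immediate consequences of duality, exactly as in the $\symkk$/$\symk$ case.
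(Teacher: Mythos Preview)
Your proposal is correct and is precisely the argument the paper has in mind: the paper's entire proof is the single sentence ``Follows in the same way that $\symkk$ and $\symk$ are realized as dual Hopf algebras,'' and what you have written is a careful unpacking of that analogy. Your verification that $\Nsymk$ is closed under $\Delta$ and $S$, and the identification of $\ker r$ with the span (hence the ideal) of the non-$k$-bounded $M_\alpha$, are exactly the routine checks that make the analogy go through.
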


\begin{proof}
Follows in the same way that $\symkk$ and $\symk$ are realized as dual Hopf algebras.
\end{proof}

We let $\langle \cdot, \cdot \rangle_{\Ck}: \Qsymk \times \Nsymk \rightarrow \mathbb{Q}$ denote the pairing inherited from the pairing $\langle \cdot, \cdot \rangle_{C} : \Qsym \times \Nsym$ .  In particular, $\langle M_\alpha, H_\beta \rangle_{\Ck} = \delta_{\alpha, \beta}$ for $\alpha, \beta \in \Ck$.

\subsection{Horizontal $k$-composition strips and quasi-symmetric affine Schur functions}

For two compositions $\beta \lessdot_{\C} \alpha$, we say that $\alpha // \beta$ is a horizontal $k$-composition strip if $\alpha// \beta$ is a composition strip and $\lambda(\alpha)/\lambda(\beta)$ is a horizontal $k$-strip.

A semistandard $k$-composition tableaux of shape $\alpha$ and content $\beta$ is a sequence of $k$-bounded compositions $\emptyset = \gamma^0 \lessdot_{\C} \gamma^1 \lessdot_{\C} \gamma^2 \lessdot_{\C} \cdots \lessdot_{\C} \gamma^m = \alpha$ such that the skew diagrams $\gamma^i // \gamma^{i-1}$ are horizontal $k$-composition strips of size $\beta_i$. The number of semistandard $k$-composition tableaux of shape $\alpha$ and content $\beta$ is denoted $\widetilde{K}^{(k)}_{\alpha, \beta}$. 

\begin{Example} Let $k = 3$, $\alpha = [1,3,1,1]$ and $\beta = [1,1,2,1,1]$. Then there are two semistandard $k$-composition tableaux of shape $\alpha$ and content $\beta$. They are \[\emptyset \lessdot_{\C} [1] \lessdot_{\C} [1,1] \lessdot_{\C} [2,1,1] \lessdot_{\C} [3,1,1] \lessdot_{\C} [1,3,1,1] \textrm{ and} \] \[\emptyset \lessdot_{\C} [1] \lessdot_{\C} [1,1] \lessdot_{\C} [2,1,1] \lessdot_{\C} [1,2,1,1] \lessdot_{\C} [1,3,1,1].\]

\[\tikztableau{{$5$},{$3$, $3$,$4$},{$2$}, {$1$}} \hspace{.7in} \tikztableau{{$4$},{$3$, $3$,$5$},{$2$}, {$1$}}\]

\end{Example}

\begin{Theorem}\label{def:nsk}
There exists a basis, which we will denote $\{\NSk\}_{\alpha \in \Ck}$, of $\Nsymk$ uniquely defined by the non-commutative affine Pieri rule: 
\[ H_i \NSk  = \sum_\beta \NSk[\beta],\] where the sum is over all compositions $\beta$ for which $\beta // \alpha$ is a horizontal $k$-composition strip of size $i$.
\end{Theorem}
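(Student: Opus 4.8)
The plan is to build the basis on the dual side and transport it through the pairing $\langle\cdot,\cdot\rangle_{\Ck}$. For $\alpha\in\Ck$ set
\[\QSk[\alpha]:=\sum_{\beta\in\Ck}\widetilde{K}^{(k)}_{\alpha,\beta}\,M_\beta\in\Qsymk,\]
in imitation of the non-affine definition. I would first show that $\{\QSk[\alpha]\}_{\alpha\in\Ck}$ is a basis of $\Qsymk$, for which it suffices to prove a unitriangularity lemma: fixing the linear order $\preceq$ on compositions for which $\widetilde{K}$ is unitriangular (constructed in \cite{HLMvW}, and refining dominance on the sorted partitions $\lambda(\cdot)$), I claim $\widetilde{K}^{(k)}$ is also unitriangular for $\preceq$. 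Since $\widetilde{K}^{(k)}_{\alpha,\beta}=0$ unless $|\alpha|=|\beta|$, the transition matrix $\bigl(\widetilde{K}^{(k)}_{\alpha,\beta}\bigr)$ is then block diagonal by degree with finite unitriangular blocks, hence invertible; let $\{\NSk[\alpha]\}_{\alpha\in\Ck}$ be the basis of $\Nsymk$ dual to $\{\QSk[\alpha]\}$. Equivalently $\NSk[\alpha]=\sum_\beta\bigl((\widetilde{K}^{(k)})^{-1}\bigr)_{\alpha,\beta}H_\beta$, so that $H_\beta=\sum_\alpha\widetilde{K}^{(k)}_{\alpha,\beta}\NSk[\alpha]$ and $\NSk[\emptyset]=1$.

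The unitriangularity lemma follows from the two ingredients the definition is assembled from. A semistandard $k$-composition tableau is in particular a semistandard composition tableau, so $\widetilde{K}^{(k)}_{\alpha,\beta}\le\widetilde{K}_{\alpha,\beta}$, and the unitriangularity of $\widetilde{K}$ from \cite{HLMvW} already forces the vanishing $\widetilde{K}^{(k)}_{\alpha,\beta}=0$ unless $\beta\preceq\alpha$ (alternatively this follows from the $k$-Schur unitriangularity of \cite{LLM03}, since the chain of partition shapes of such a tableau is a semistandard $k$-tableau, so $\widetilde{K}^{(k)}_{\alpha,\beta}=0$ unless $K^{(k)}_{\lambda(\alpha),\lambda(\beta)}\ne 0$). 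For the diagonal one checks that the unique semistandard composition tableau of shape $\alpha$ and content $\alpha$ does satisfy the extra $k$-condition, which is clear from its explicit staircase form, so $\widetilde{K}^{(k)}_{\alpha,\alpha}=1$. Uniqueness of the basis (with the normalization $\NSk[\emptyset]=1$) then needs nothing new: if $\{B_\alpha\}$ is any basis of $\Nsymk$ with $B_\emptyset=1$ obeying the stated Pieri rule, then iterating gives $H_\beta=H_{\beta_1}\cdots H_{\beta_m}\cdot 1=\sum_\alpha\widetilde{K}^{(k)}_{\alpha,\beta}B_\alpha$, the coefficient of $B_\alpha$ counting the chains of horizontal $k$-composition strips of the prescribed sizes, and inverting the matrix $\widetilde{K}^{(k)}$ forces $B_\alpha=\NSk[\alpha]$.

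What remains, and this is the real content, is to check that the $\NSk[\alpha]$ just constructed satisfy the non-commutative affine Pieri rule. Dualizing, the coefficient of $\NSk[\beta]$ in $H_i\NSk[\alpha]$ equals $\langle\NSk[\alpha],H_i^{\perp}\QSk[\beta]\rangle$, where $H_i^{\perp}$ is adjoint to left multiplication by $H_i$; on the monomial basis $H_i^{\perp}M_\gamma=M_{(\gamma_2,\dots,\gamma_m)}$ if $\gamma_1=i$ and $H_i^{\perp}M_\gamma=0$ otherwise. Hence the Pieri rule is equivalent to the branching identity $H_i^{\perp}\QSk[\beta]=\sum_\alpha\QSk[\alpha]$, the sum over all $\alpha$ with $\beta//\alpha$ a horizontal $k$-composition strip of size $i$; comparing coefficients of $M_\epsilon$ turns this into the combinatorial recursion
\[\widetilde{K}^{(k)}_{\beta,\,(i,\epsilon_1,\dots,\epsilon_\ell)}=\sum_{\alpha}\widetilde{K}^{(k)}_{\alpha,\epsilon},\]
the sum again over $\alpha$ with $\beta//\alpha$ a horizontal $k$-composition strip of size $i$, where $\epsilon=(\epsilon_1,\dots,\epsilon_\ell)$. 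I would prove this by decomposing a semistandard $k$-composition tableau of shape $\beta$ according to the distinguished part $i$ of the content: the $i$ cells of that strip form a horizontal $k$-composition strip $\beta//\alpha$, and the remaining cells must reassemble into a semistandard $k$-composition tableau of the \emph{straight} shape $\alpha$ with content $\epsilon$. This is precisely the decomposition underlying the non-commutative Pieri rule of Definition~\ref{def:ncschur} (\cite{BLvW}); the extra task here is only to verify compatibility with the $k$-condition, which is automatic because the partition shapes $\lambda(\gamma^0)\subset\lambda(\gamma^1)\subset\cdots$ of any composition chain form their own chain whose removed strip is controlled by the $k$-Pieri rule of Definition~\ref{def:schur}. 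I expect the main obstacle to be exactly where \cite{HLMvW} and \cite{BLvW} spend their effort: the bookkeeping that re-justifies a skew composition shape into a straight one while preserving every semistandardness condition and the sizes of the strips; once that bijection is in hand the affine constraint simply rides along.
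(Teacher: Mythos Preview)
Your proposal is correct but takes a different route from the paper. The paper argues directly on the $\Nsymk$ side: the Pieri relation is itself a unitriangular recursion, since among all $\beta$ with $\beta//\alpha$ a horizontal $k$-composition strip of size $i$ the prepended composition $[i]\cup\alpha$ always appears and is maximal in a linear extension of dominance on $\lambda(\cdot)$. Inductively on degree, the Pieri identities for all pairs $(i,\alpha)$ with $i+|\alpha|=n$ therefore form a square unitriangular linear system in the unknowns $\{\NSk[\beta]\}_{\beta\vDash n}$, whose unique solution \emph{is} the basis; the Pieri rule then holds by construction, with no separate verification needed. Your detour through $\Qsymk$ rests on the very same unitriangularity fact (your diagonal check $\widetilde{K}^{(k)}_{\alpha,\alpha}=1$ is exactly the observation that $[i]\cup\alpha$ occurs in the expansion) but front-loads the monomial expansion of $\QSk$, which the paper only derives afterward from the finished Pieri rule.

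You also overestimate the difficulty of your final step. With the content convention aligned to left multiplication by $H_i$---so that the first content entry records the \emph{outermost} strip, as it must for $H_\beta=\sum_\alpha\widetilde K^{(k)}_{\alpha,\beta}\NSk[\alpha]$ to follow by iterating Pieri---your recursion $\widetilde{K}^{(k)}_{\beta,(i,\epsilon)}=\sum_\alpha\widetilde{K}^{(k)}_{\alpha,\epsilon}$ is a tautology: removing the outermost strip of a chain from $\emptyset$ to $\beta$ leaves a chain from $\emptyset$ to $\alpha$, already a straight-shape $k$-composition tableau of shape $\alpha$ and content $\epsilon$. No skew-to-straight rectification enters; the delicate bijections of \cite{HLMvW,BLvW} concern the filling model of composition tableaux, which the chain model used here bypasses entirely. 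So the obstacle you anticipate dissolves, and what remains of your argument is essentially the paper's, in dual clothing.
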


\begin{proof}
It is sufficient to show that the non-commutative affine Pieri rule is invertible. We place an ordering on compositions of $n$ as defined in \cite{HLMvW}. The partial order $\alpha < \beta$ when $\lambda(\alpha) < \lambda(\beta)$ in dominance order is extended to a linear order. Then one can see that $H_i \NSk = \NSk[{[i] \cup \alpha}] + \sum_{\beta < [i] \cup \alpha} \NSk[\beta]$. Therefore the transition matrix between $\{H_i \NSk \}_{i \in \{1, \dots k\}, \alpha \vDash n-i}$ and $\{ \NSk[\beta]\}_{\beta \vDash n }$ is unitriangular and hence $\NSk$ is well defined and forms a basis of $\Nsymk$.
\end{proof}

\begin{Corollary}\label{rem:htonckschur}
As a consequence of the non-commutative affine Pieri rule, \[H_\beta = \sum_{\alpha} \widetilde{K}^{(k)}_{\alpha, \beta} \NSk .\]
\end{Corollary}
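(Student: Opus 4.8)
The plan is to imitate the proofs of Corollary~\ref{rem:htokschur} and Corollary~\ref{rem:htoncschur}, proceeding by induction on the number of parts of $\beta$. The base case $\beta = \emptyset$ is immediate, since both sides equal $1$ (equivalently, $H_{\emptyset} = \NSk[\emptyset]$ by the normalization built into Theorem~\ref{def:nsk}). For the inductive step, write $\beta = [\beta_1] \cup \beta'$ where $\beta'$ is $\beta$ with its first part removed, so that $H_\beta = H_{\beta_1} H_{\beta'}$. By the inductive hypothesis, $H_{\beta'} = \sum_{\gamma} \widetilde{K}^{(k)}_{\gamma, \beta'} \NSk[\gamma]$, and then applying $H_{\beta_1}$ and the non-commutative affine Pieri rule of Theorem~\ref{def:nsk} gives
\[
H_\beta = \sum_\gamma \widetilde{K}^{(k)}_{\gamma, \beta'} \, H_{\beta_1} \NSk[\gamma] = \sum_\gamma \widetilde{K}^{(k)}_{\gamma, \beta'} \sum_{\alpha} \NSk[\alpha],
\]
where the inner sum is over all $\alpha$ with $\gamma \lessdot_{\C} \alpha$ and $\alpha // \gamma$ a horizontal $k$-composition strip of size $\beta_1$.

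The remaining step is to recognize the resulting coefficient of $\NSk[\alpha]$ as $\widetilde{K}^{(k)}_{\alpha, \beta}$. By definition, $\widetilde{K}^{(k)}_{\alpha, \beta}$ counts sequences $\emptyset = \gamma^0 \lessdot_{\C} \gamma^1 \lessdot_{\C} \cdots \lessdot_{\C} \gamma^m = \alpha$ in which each $\gamma^i // \gamma^{i-1}$ is a horizontal $k$-composition strip of size $\beta_i$. Deleting the \emph{first} step of such a sequence exhibits a bijection between these tableaux and pairs consisting of a composition $\gamma$ together with (i) a horizontal $k$-composition strip $\alpha // \gamma$ of size $\beta_1$ and (ii) a semistandard $k$-composition tableau of shape $\gamma$ and content $\beta'$ — that is, $\widetilde{K}^{(k)}_{\alpha, \beta} = \sum_{\gamma} \widetilde{K}^{(k)}_{\gamma, \beta'}$, summed over $\gamma$ with $\gamma \lessdot_{\C} \alpha$ and $\alpha // \gamma$ a horizontal $k$-composition strip of size $\beta_1$. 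This is exactly the coefficient extracted above, completing the induction.

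Here one should be slightly careful about the convention for building the content sequence: the definition of semistandard $k$-composition tableau reads the content parts $\beta_1, \beta_2, \dots$ in order as one goes \emph{up} the chain, so one must match this with the order in which the factors $H_{\beta_1}, H_{\beta'}$ are applied. I expect this bookkeeping — confirming that "first part of $\beta$" in the Pieri expansion corresponds to "first strip added" in the tableau, and that no extra conditions are lost in passing from the $\beta'$-subchain of an $\alpha$-tableau to a genuine $\gamma$-tableau — to be the only real obstacle, and it is purely a matter of chasing the definitions rather than a substantive difficulty. Everything else is a verbatim transcription of the symmetric and quasi-symmetric cases.
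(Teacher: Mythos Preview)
Your approach --- induction on the length of $\beta$ via the non-commutative affine Pieri rule --- is exactly the paper's (the paper offers no argument beyond pointing to the Pieri rule, in parallel with Corollary~\ref{rem:schur}). The bookkeeping worry you flag is the one thing to straighten out: left-multiplying by $H_{\beta_1}$ attaches the $\beta_1$-sized strip as the \emph{last} step of the chain, so your bijection should delete the last step (taking $\gamma=\gamma^{m-1}$, content $[\beta_1,\dots,\beta_{m-1}]$), not the first; once that is corrected it is, as you anticipate, purely notational.
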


\begin{Theorem}\label{thm:ktoinf}
As $k$ approaches infinity, \[ \NSk \to \NS.\] 
\end{Theorem}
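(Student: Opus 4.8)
The plan is to argue directly from the defining Pieri rules, exactly as in the proof of $s^{(k)}_\lambda \to s_\lambda$ and its quasi-symmetric analogue. The basis $\{\NSk\}$ is characterized by the non-commutative affine Pieri rule $H_i \NSk = \sum_\beta \NSk[\beta]$, where the sum runs over $\beta$ with $\beta // \alpha$ a horizontal $k$-composition strip of size $i$; the basis $\{\NS\}$ is characterized by $H_i \NS = \sum_\beta \NS[\beta]$, the sum over $\beta$ with $\beta // \alpha$ a horizontal composition strip of size $i$. The key combinatorial observation is that, for a fixed pair $\beta \lessdot_{\C} \alpha$ and a fixed $n = |\alpha|$, the condition ``$\alpha // \beta$ is a horizontal $k$-composition strip'' is equivalent to ``$\alpha // \beta$ is a horizontal composition strip'' once $k$ is large enough: indeed the former adds the requirement that $\lambda(\alpha)/\lambda(\beta)$ be a horizontal $k$-strip, and a horizontal $k$-strip is the same as a horizontal strip whenever $k$ exceeds the largest part of $\lambda(\alpha)$, which is certainly true once $k \geq n$.

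First I would fix a degree $n$ and choose $k \geq n$. Then for every composition $\alpha$ of size at most $n$ and every $i \leq n$, the index set in the affine Pieri rule for $H_i \NSk$ coincides with the index set in the ordinary Pieri rule for $H_i \NS$. Next I would prove by induction on $|\alpha|$ that $\NSk = \NS$ for all $\alpha$ with $|\alpha| \leq n$: the base case $|\alpha| = 0$ is $\NSk[{[\,]}] = 1 = \NS[{[\,]}]$, and for the inductive step one uses Theorem~\ref{def:nsk} (or its ordinary-Pieri analogue, Definition~\ref{def:ncschur}), which says $\NSk[{[i]\cup\alpha}]$ is the unitriangular-leading term of $H_i \NSk$ in the composition order; since both the multiplier $H_i$ and the lower-order terms agree by the induction hypothesis and the matching of index sets, the leading terms agree as well, and in fact every $\NSk[\beta]$ with $|\beta| \leq n$ equals $\NS[\beta]$ by comparing the two expansions of $H_i\NSk$ and $H_i\NS$ term by term. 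Finally, since $n$ was arbitrary, for each fixed composition $\alpha$ we get $\NSk = \NS$ for all $k \geq |\alpha|$, which is precisely the assertion that $\NSk \to \NS$ (the sequence is eventually constant, inside $\Nsym$, where $\Nsymk$ embeds).

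The main obstacle is simply making the ``$k$ large enough'' stabilization precise and book-keeping it correctly: one must check that increasing $k$ does not \emph{remove} any composition $\beta$ from the Pieri sum (it cannot, since a horizontal $k$-composition strip is in particular a horizontal composition strip) and does not \emph{add} new ones beyond those already present for $k = \infty$ (true once $k$ exceeds all relevant part sizes). Everything else is the same unitriangular induction already used to establish existence of the $\NSk$ in Theorem~\ref{def:nsk}, so no genuinely new technical input is needed.
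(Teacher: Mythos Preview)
Your proposal is correct and takes essentially the same approach as the paper: the paper's proof is the one-line observation that for $k$ large enough the extra condition ``$\lambda(\alpha)/\lambda(\beta)$ is a horizontal $k$-strip'' in Theorem~\ref{def:nsk} becomes vacuous, so the defining Pieri rules for $\NSk$ and $\NS$ coincide and hence (by the uniqueness built into those definitions) the bases agree. You have simply spelled out the implicit induction and the explicit bound $k \geq |\alpha|$ more carefully than the paper does.
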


\begin{proof}
For $k$ large enough, the condition of $\lambda(\alpha)/ \lambda(\beta)$ being a horizontal $k$-strip in Theorem \ref{def:nsk} becomes vacuous, so the definition matches Definition \ref{def:ncschur}.
\end{proof}

\begin{Definition}
The quasi-symmetric affine Schur functions $\{\QSk\}_{\alpha \in \Ck}$ are the basis of $\Qsymk$ which are dual to the basis of non-commutative affine Schur functions.
\end{Definition}

\begin{Theorem}
The quasi-symmetric affine Schur functions expand in the quasi-symmetric monomial basis as: \[\QSk = \sum_{\beta} \widetilde{K}^{(k)}_{\alpha, \beta} M_\beta.\]
\end{Theorem}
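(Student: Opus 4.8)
The plan is to follow exactly the template used in the three analogous theorems earlier in the paper (the monomial expansions of $s_\lambda$, of $\dualkschur$, and of $\QS$): the desired expansion is simply the dual statement to Corollary \ref{rem:htonckschur}, extracted by pairing against the basis $\{H_\beta\}$.

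First I would recall that by definition the coefficient of $M_\beta$ in the expansion $\QSk = \sum_\gamma c_\gamma M_\gamma$ can be recovered by pairing with $H_\beta$, since $\langle M_\gamma, H_\beta\rangle_{\Ck} = \delta_{\gamma,\beta}$; thus $c_\beta = \langle \QSk, H_\beta\rangle_{\Ck}$. Next I would substitute the expansion of $H_\beta$ supplied by Corollary \ref{rem:htonckschur}, namely $H_\beta = \sum_{\gamma} \widetilde{K}^{(k)}_{\gamma,\beta}\,\NSk[\gamma]$, to obtain
\[
\langle \QSk, H_\beta\rangle_{\Ck} = \Bigl\langle \QSk,\ \sum_{\gamma} \widetilde{K}^{(k)}_{\gamma,\beta}\,\NSk[\gamma]\Bigr\rangle_{\Ck} = \widetilde{K}^{(k)}_{\alpha,\beta},
\]
where the last equality uses that $\{\QSk\}$ and $\{\NSk\}$ are dual bases of $\Qsymk$ and $\Nsymk$ under $\langle\cdot,\cdot\rangle_{\Ck}$, so $\langle \QSk, \NSk[\gamma]\rangle_{\Ck} = \delta_{\alpha,\gamma}$. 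Reading off, the coefficient of $M_\beta$ in $\QSk$ is $\widetilde{K}^{(k)}_{\alpha,\beta}$, which is the claim.

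There is no real obstacle here: every ingredient is already in place — Corollary \ref{rem:htonckschur} gives the change of basis on the $\Nsymk$ side, the duality of the pairing is immediate from the definition of $\{\QSk\}$, and the compatibility $\langle M_\alpha, H_\beta\rangle_{\Ck} = \delta_{\alpha,\beta}$ was recorded when the pairing was introduced. The only point worth stating explicitly is that the sum over $\gamma$ in Corollary \ref{rem:htonckschur} ranges over $k$-bounded compositions of $|\beta|$, so all terms lie in $\Nsymk$ and the pairing $\langle\cdot,\cdot\rangle_{\Ck}$ applies termwise; this is why the restriction to $\Ck$ throughout (rather than passing through $\Nsym$) causes no difficulty.
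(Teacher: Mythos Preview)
Your proof is correct and is essentially identical to the paper's own argument: pair $\QSk$ with $H_\beta$, expand $H_\beta$ via Corollary~\ref{rem:htonckschur}, and use the duality $\langle \QSk, \NSk[\gamma]\rangle_{\Ck} = \delta_{\alpha,\gamma}$ to read off the coefficient. The only difference is that you spell out the justification for each step a bit more carefully than the paper does.
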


\begin{proof}

Using  Corollary \ref{rem:htonckschur}, \[ \langle \QSk, H_\beta \rangle_{\Ck} = \langle \QSk, \sum_{\gamma} \widetilde{K}^{(k)}_{\gamma, \beta} \NSk[\gamma] \rangle_{\Ck} = \widetilde{K}^{(k)}_{\alpha, \beta},\] so the coefficient of $M_\beta$ in $\QSk[\alpha]$ is $\widetilde{K}^{(k)}_{\alpha, \beta}$.
\end{proof}

\begin{Theorem}
As $k$ approaches infinity, \[ \QSk \to \QS.\] 
\end{Theorem}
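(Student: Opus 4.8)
The plan is to reduce the statement to the stabilization of the Kostka-type numbers $\widetilde{K}^{(k)}_{\alpha,\beta}$, exactly as in the proof of Theorem~\ref{thm:ktoinf}. Fix a composition $\alpha$ and put $n=|\alpha|$. In the monomial expansion $\QSk = \sum_\beta \widetilde{K}^{(k)}_{\alpha,\beta} M_\beta$ established above, only compositions $\beta$ with $|\beta|=n$ occur, and every composition appearing in a chain $\emptyset = \gamma^0 \lessdot_{\C} \cdots \lessdot_{\C} \gamma^m = \alpha$ has size at most $n$; hence for $k\ge n$ all of these compositions are $k$-bounded. In particular, in degrees $\le n$ the defining ideal $\langle M_\alpha : \alpha\notin\Ck\rangle$ vanishes, so $\Qsymk$ and $\Qsym$ agree through degree $n$ and we may compare the homogeneous degree-$n$ elements $\QSk$ and $\QS$ inside $\Qsym$ directly; the quotient in the definition of $\Qsymk$ plays no role.

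Second, I would show that for $k$ large the horizontal $k$-composition strip condition collapses to the horizontal composition strip condition on all shapes that occur. By definition, $\alpha//\beta$ is a horizontal $k$-composition strip iff it is a horizontal composition strip and $\lambda(\alpha)/\lambda(\beta)$ is a horizontal $k$-strip. As already noted (in the proofs that $s^{(k)}_\lambda \to s_\lambda$ and $\NSk\to\NS$), for $k$ sufficiently large the condition of being a horizontal $k$-strip between partitions of bounded size reduces to being a horizontal strip; since every partition in sight has at most $n$ cells, this holds for all $k$ larger than some bound depending only on $n$. Consequently, for such $k$ a chain $\emptyset = \gamma^0 \lessdot_{\C} \cdots \lessdot_{\C} \gamma^m = \alpha$ is a semistandard $k$-composition tableau of shape $\alpha$ and content $\beta$ iff it is a semistandard composition tableau of that shape and content, so $\widetilde{K}^{(k)}_{\alpha,\beta} = \widetilde{K}_{\alpha,\beta}$. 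Plugging this into the two monomial expansions gives $\QSk = \sum_\beta \widetilde{K}^{(k)}_{\alpha,\beta} M_\beta = \sum_\beta \widetilde{K}_{\alpha,\beta} M_\beta = \QS$ for all sufficiently large $k$, which is the content of the limit statement.

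An alternative is to argue by duality from Theorem~\ref{thm:ktoinf}: the pairing $\langle\cdot,\cdot\rangle_{\Ck}$ is inherited from $\langle\cdot,\cdot\rangle_{\C}$, the $\NSk[\beta]$ stabilize to $\NS[\beta]$, and $\QSk$ is by definition the dual basis to $\{\NSk[\beta]\}$, so the dual bases stabilize as well. I expect no serious obstacle here --- like the analogous theorems in the paper this is a stabilization result --- and the only point requiring a little care is the bookkeeping just described: making sure the limit is interpreted as the literal equality $\QSk=\QS$ once $k\ge|\alpha|$ (after identifying the low-degree parts of $\Qsymk$ and $\Qsym$), rather than as convergence in some topology on the quotient. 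For that reason I would present the direct monomial-expansion argument and mention the duality argument only as a remark.
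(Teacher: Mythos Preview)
Your proof is correct. The paper's own proof is the one-line duality argument you list as an alternative at the end: ``Follows from Theorem~\ref{thm:ktoinf} and duality.'' So you have in fact reproduced the paper's argument, but you foreground a different one.

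Your primary route---showing directly that $\widetilde{K}^{(k)}_{\alpha,\beta}=\widetilde{K}_{\alpha,\beta}$ for $k$ large and then comparing monomial expansions---is a genuine alternative. It has the advantage of being self-contained and of making explicit exactly when stabilization occurs (namely once $k\ge |\alpha|$), whereas the paper's duality argument is shorter but leaves the reader to unpack what ``duality'' means here (that the pairings $\langle\cdot,\cdot\rangle_{\Ck}$ and $\langle\cdot,\cdot\rangle_{\C}$ agree in low degree, and that dual bases of stabilizing bases stabilize). One small point you could make more explicit in your direct argument: when you pass from ``horizontal $k$-strip reduces to horizontal strip'' to ``horizontal $k$-composition strip reduces to horizontal composition strip,'' you are implicitly using that a horizontal composition strip $\alpha//\beta$ already forces $\lambda(\alpha)/\lambda(\beta)$ to be a horizontal strip (the fact invoked in the proof of Theorem~\ref{projection}), so the $k$-strip condition becomes genuinely vacuous rather than merely weaker.
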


\begin{proof}
Follows from Theorem \ref{thm:ktoinf} and duality.
\end{proof}

\begin{Remark}
The map $\chi: \Nsym \to \sym$ is well defined on the sub algebra $\Nsymk$, and maps onto $\symk$.
\end{Remark}

Contrasting with Theorem \ref{projection}, we see that non-commutative affine Schur functions are pre images of $k$-Schur functions under $\chi$.

\begin{Theorem} For a $k$-bounded composition $\alpha$, 
\[ \chi(\NSk) = s^{(k)}_{\lambda(\alpha)}.\]
\end{Theorem}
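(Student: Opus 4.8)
The plan is to mimic the proof of Theorem \ref{projection}, using induction on the degree $n$ of $\alpha$ together with the two Pieri rules and the compatibility of the combinatorics under $\lambda(\cdot)$. Concretely, I would argue by induction on $n = |\alpha|$, the base case $n=0$ being trivial since both sides equal $1$. For the inductive step, write $\alpha = [i] \cup \alpha'$ where $\alpha'$ is the composition obtained by deleting the first part; by Theorem \ref{def:nsk} (read in reverse, i.e.\ the unitriangularity established in its proof), $\NSk[\alpha]$ can be extracted from $H_i \NSk[\alpha']$ since $H_i \NSk[\alpha'] = \NSk[\alpha] + \sum_{\beta < \alpha} \NSk[\beta]$ over $\beta \vDash n$ with $\beta < \alpha$ in the chosen linear order. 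Applying the algebra homomorphism $\chi$ and using $\chi(H_i) = h_i$ together with the inductive hypothesis $\chi(\NSk[\alpha']) = \kschur[k]_{\lambda(\alpha')}$, the left side becomes $h_i \, \kschur[k]_{\lambda(\alpha')}$, which by the $k$-Pieri rule (Definition \ref{def:schur}) is $\sum_{\mu} \kschur[k]_{\mu}$ over $k$-bounded $\mu$ with $\mu/\lambda(\alpha')$ a horizontal $k$-strip of size $i$.

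The key combinatorial input, exactly parallel to the one invoked in the proof of Theorem \ref{projection}, is that $\beta // \gamma$ being a horizontal $k$-composition strip of size $i$ forces $\lambda(\beta)/\lambda(\gamma)$ to be a horizontal $k$-strip of size $i$ — but this is immediate from the very definition of horizontal $k$-composition strip given just above (it bakes in the requirement that $\lambda(\alpha)/\lambda(\beta)$ be a horizontal $k$-strip). Hence applying $\chi$ to the non-commutative affine Pieri rule and comparing with the $k$-Pieri rule, term by term, one sees that $\chi\big(\sum_{\beta} \NSk[\beta]\big)$, where $\beta$ ranges over compositions with $\beta // \alpha'$ a horizontal $k$-composition strip of size $i$, maps onto a sum of $k$-Schur functions $\kschur[k]_{\lambda(\beta)}$, and each such $\lambda(\beta)$ is a valid target in the $k$-Pieri rule. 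Combined with the inductive hypothesis applied to each $\NSk[\beta]$ with $|\beta| = n$ but $\beta \neq \alpha$ (all of smaller rank in the linear order, so treated by a secondary induction on the linear order within degree $n$, as in Theorem \ref{def:nsk}'s proof), we can solve for $\chi(\NSk[\alpha])$ and obtain $\kschur[k]_{\lambda(\alpha)}$.

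One subtlety to be careful about: unlike the classical case where every partition $\mu$ appearing on the right of the $k$-Pieri rule is hit by exactly one composition, here several compositions $\beta$ may satisfy $\lambda(\beta) = \mu$, so the equation $\chi(H_i \NSk[\alpha']) = h_i \kschur[k]_{\lambda(\alpha')}$ becomes an identity among sums of $k$-Schur functions whose multiplicities must be checked to match. The cleanest way around this is the dual-pairing argument used in Theorem \ref{decompose}: rather than tracking multiplicities, compute $\langle \kschur[k]_{\lambda(\alpha)} , \NSk[\alpha] \rangle$ through the identity $\langle f, g\rangle_{\Ck} = \langle f, \chi(g)\rangle_{\Pk}$ valid when $f \in \Qsymk$ lies in $\symkk$; but this presupposes knowing the statement for the dual side, so for a self-contained proof the induction above is preferable, with the multiplicity bookkeeping handled by noting that $\chi$ is linear and that we already know by induction the images of all $\NSk[\beta]$ with $\beta < \alpha$.

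The main obstacle I anticipate is precisely this multiplicity matching — ensuring that after applying $\chi$ the two expansions (image of the non-commutative affine Pieri rule versus the $k$-Pieri rule) agree not just on the set of partitions appearing but on coefficients. I expect this to follow from a counting identity: the number of compositions $\beta$ with $\lambda(\beta) = \mu$ such that $\beta // \alpha'$ is a horizontal $k$-composition strip of size $i$, summed appropriately, reconciles with the coefficient $1$ in the $k$-Pieri rule once one uses the inductive structure, much as in the classical proof of Theorem \ref{projection} where the analogous statement is asserted to "follow from induction." In practice I would phrase the induction on the linear order within each degree so that $\chi(\NSk[\alpha])$ is determined by subtracting known quantities, sidestepping a direct bijective count; the validity of this subtraction is exactly the unitriangularity already proved in Theorem \ref{def:nsk}.
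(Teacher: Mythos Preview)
Your proposal is correct and follows the paper's route: induction via the non-commutative affine Pieri rule (Theorem \ref{def:nsk}) together with the fact that horizontal $k$-composition strips project to horizontal $k$-strips under $\lambda(\cdot)$. Your multiplicity concern, however, is unnecessary and your unitriangularity workaround, while valid, overcomplicates things: by definition $\beta // \alpha'$ is a horizontal $k$-composition strip precisely when it is a horizontal composition strip \emph{and} $\lambda(\beta)/\lambda(\alpha')$ is a horizontal $k$-strip; since a horizontal $k$-strip is already a horizontal strip, the composition-level lifting is governed entirely by the classical composition-strip combinatorics, so the same one-to-one correspondence (each admissible $\mu$ has a unique composition lift over a fixed $\alpha'$) that makes Theorem \ref{projection} work applies verbatim here, with the $k$-condition merely selecting which partitions $\mu$ appear. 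The paper's one-sentence proof is exactly this observation.
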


\begin{proof}
This follows from induction, Theorem \ref{def:nsk} and the fact that $\alpha // \beta$ being a horizontal composition strip implies that $\lambda(\alpha)/\lambda(\beta)$ is a horizontal strip, a condition already imposed by the fact that $\lambda(\alpha)/\lambda(\beta)$ is a horizontal $k$-strip.
\end{proof}

Contrasting with Theorem \ref{decompose}, we see that the quasi-symmetric affine functions decompose the 
dual $k$-Schur functions into quasi-symmetric components.

\begin{Theorem}
For a $k$-bounded partition $\lambda$, \[ \dualkschur = \sum_{\substack{\alpha \\ \lambda(\alpha) = \lambda}} \QSk.\]
\end{Theorem}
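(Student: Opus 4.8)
The plan is to reproduce the proof of Theorem~\ref{decompose} almost verbatim in the $k$-bounded setting, using the two facts already available to us: that $\chi$ restricts to a surjection $\Nsymk \to \symk$ satisfying $\chi(\NSk) = s^{(k)}_{\lambda(\alpha)}$, and that, by definition, $\langle \dualkschur, s^{(k)}_\mu\rangle_{\Pk} = \delta_{\lambda,\mu}$.

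First I would record the pairing-compatibility lemma: if $f$ is an element of $\Qsymk$ lying in the image of $\symkk$ and $g \in \Nsymk$, then $\langle f, g\rangle_{\Ck} = \langle f, \chi(g)\rangle_{\Pk}$. This is inherited directly from the corresponding statement for $\Qsym$ and $\Nsym$ invoked in the proof of Theorem~\ref{decompose}, together with the observation that both quotients are formed by killing exactly the monomials indexed by ``too large'' partitions (resp.\ compositions), and these are matched up by $\chi$: a composition rearranging a non-$k$-bounded partition is itself non-$k$-bounded, so the passage from $\sym$ to $\symkk$ is compatible with the passage from $\Qsym$ to $\Qsymk$. One also checks along the way that $\symkk \to \Qsymk$ is injective --- for $\lambda \in \Pk$ every composition rearranging $\lambda$ is $k$-bounded, so $m_\lambda$ has nonzero image with support disjoint from that of $m_\mu$ for $\mu \ne \lambda$ --- which lets us speak of the expansion of $\dualkschur$ in the $\QSk$ basis of $\Qsymk$.

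Now, since $\{\QSk[\alpha]\}_{\alpha \in \Ck}$ is the basis of $\Qsymk$ dual to $\{\NSk[\alpha]\}_{\alpha \in \Ck}$, the coefficient of $\QSk[\alpha]$ in $\dualkschur$ equals $\langle \dualkschur, \NSk[\alpha]\rangle_{\Ck}$. Applying the lemma and then $\chi(\NSk) = s^{(k)}_{\lambda(\alpha)}$,
\[ \langle \dualkschur, \NSk[\alpha]\rangle_{\Ck} = \langle \dualkschur, \chi(\NSk[\alpha])\rangle_{\Pk} = \langle \dualkschur, s^{(k)}_{\lambda(\alpha)}\rangle_{\Pk} = \delta_{\lambda,\lambda(\alpha)} . \]
Thus $\QSk[\alpha]$ occurs in $\dualkschur$ with coefficient $1$ if $\lambda(\alpha) = \lambda$ and $0$ otherwise, which is exactly the stated identity. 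The only real work is the bookkeeping in the compatibility lemma; since both quotients annihilate precisely the spans respected by $\chi$, no idea beyond the $k = \infty$ argument of Theorem~\ref{decompose} is required, so I expect no genuine obstacle here.
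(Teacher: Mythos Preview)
Your proof is correct and follows essentially the same approach as the paper: both invoke the compatibility $\langle f, g\rangle_{\Ck} = \langle f, \chi(g)\rangle_{\Pk}$ for $f \in \symkk$, then compute $\langle \dualkschur, \NSk[\alpha]\rangle_{\Ck} = \langle \dualkschur, s^{(k)}_{\lambda(\alpha)}\rangle_{\Pk} = \delta_{\lambda,\lambda(\alpha)}$. Your version simply supplies more justification for the compatibility lemma and the injectivity of $\symkk \hookrightarrow \Qsymk$ than the paper bothers to spell out.
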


\begin{proof}
We note that if $f \in \Qsymk$ is actually in $\symkk$ and $g \in \Nsymk$ then $\langle f, g \rangle_{\Ck} = \langle f, \chi(g) \rangle_{\Pk}$.
We can compute $\langle \dualkschur, \NSk \rangle_{\Ck} = \langle \dualkschur, s^{(k)}_{\lambda(\alpha)} \rangle_{\Pk} = \delta_{\lambda, \lambda(\alpha)}$, so the quasi-symmetric $\QSk$ function appears in $\dualkschur$ with coefficient $\delta_{\lambda, \lambda(\alpha)}$.
\end{proof}

\begin{Remark}
The elements $\NSk$ are the unique elements in $\Nsymk$ for which the Pieri rule is determined by the Pieri rule for $\NS$ and the $k$-Pieri rule for $\kschur_\lambda$. The curious reader will be compelled to ask if their structure coefficients are positive and if $\NSk$ expands positively in the $\NS$ basis. Computer exploration has determined neither holds.
\end{Remark}

\section*{Appendix}

We give explicit expansions of $\NSk$ in the non-commutative complete homogenous basis and $\QSk$ in the monomial quasi-symmetric basis for $k = 2, 3$ and $k$-bounded compositions $\alpha \vDash n$ for $2 \leq n \leq 4$. We hope to make our code available in Sage-Combinat \cite{sage-combinat} soon.

For $k = 2$ and $n=2$, the change of basis matrix from $\NSk$ to $H_\beta$ is:

\[
\bordermatrix{
~ & [2] & [1, 1] \cr
[2] & 1 & 0 \cr
[1, 1] & -1 & 1 \cr
}
\]
For $k = 2$ and $n=3$, the change of basis matrix from $\NSk$ to $H_\beta$ is:
\[
\bordermatrix{
~ & [2, 1] & [1, 2] & [1, 1, 1] \cr
[2, 1] & 1 & 0 & 0 \cr
[1, 2] & 0 & 1 & 0 \cr
[1, 1, 1] & 0 & -1 & 1 \cr
}
\]
For $k = 2$ and $n=4$, the change of basis matrix from $\NSk$ to $H_\beta$ is:
\[
\bordermatrix{
~ & [2, 2] & [2, 1, 1] & [1, 2, 1] & [1, 1, 2] & [1,
1, 1, 1] \cr
[2, 2] & 1 & 0 & 0 & 0 & 0 \cr
[2, 1, 1] & -1 & 1 & 0 & 0 & 0 \cr
[1, 2, 1] & -1 & 0 & 1 & 0 & 0 \cr
[1, 1, 2] & -1 & 0 & 0 & 1 & 0 \cr
[1, 1, 1, 1] & 1 & -1 & 0 & -1 & 1 \cr
}
\]
For $k = 3$ and $n=2$, the change of basis matrix from $\NSk$ to $H_\beta$ is:
\[
\bordermatrix{
~ & [2] & [1, 1] \cr
[2] & 1 & 0 \cr
[1, 1] & -1 & 1 \cr
}
\]
For $k = 3$ and $n=3$, the change of basis matrix from $\NSk$ to $H_\beta$ is:
\[
\bordermatrix{
~ & [3] & [2, 1] & [1, 2] & [1, 1, 1] \cr
[3] & 1 & 0 & 0 & 0 \cr
[2, 1] & -1 & 1 & 0 & 0 \cr
[1, 2] & -1 & 0 & 1 & 0 \cr
[1, 1, 1] & 1 & -1 & -1 & 1 \cr
}
\]
For $k = 3$ and $n=4$, the change of basis matrix from $\NSk$ to $H_\beta$ is:
\[
\bordermatrix{
~ & [3, 1] & [1, 3] & [2, 2] & [2, 1, 1] & [1, 2, 1]
& [1, 1, 2] & [1, 1, 1, 1] \cr
[3, 1] & 1 & 0 & 0 & 0 & 0 & 0 & 0 \cr
[1, 3] & 0 & 1 & 0 & 0 & 0 & 0 & 0 \cr
[2, 2] & 0 & -1 & 1 & 0 & 0 & 0 & 0 \cr
[2, 1, 1] & 0 & 0 & -1 & 1 & 0 & 0 & 0 \cr
[1, 2, 1] & 0 & 0 & -1 & 0 & 1 & 0 & 0 \cr
[1, 1, 2] & 0 & 0 & -1 & 0 & 0 & 1 & 0 \cr
[1, 1, 1, 1] & 0 & 1 & 0 & 0 & -1 & -1 & 1
\cr
}
\]
For $k = 2$ and $n=2$, the change of basis matrix from $\QSk$ to $M_\beta$ is:
\[
\bordermatrix{
~ & [2] & [1, 1] \cr
[2] & 1 & 1 \cr
[1, 1] & 0 & 1 \cr
}
\]
For $k = 2$ and $n=3$, the change of basis matrix from $\QSk$ to $M_\beta$ is:
\[
\bordermatrix{
~ & [2, 1] & [1, 2] & [1, 1, 1] \cr
[2, 1] & 1 & 0 & 0 \cr
[1, 2] & 0 & 1 & 1 \cr
[1, 1, 1] & 0 & 0 & 1 \cr
}
\]
For $k = 2$ and $n=4$, the change of basis matrix from $\QSk$ to $M_\beta$ is:
\[
\bordermatrix{
~ & [2, 2] & [2, 1, 1] & [1, 2, 1] & [1, 1, 2] & [1,
1, 1, 1] \cr
[2, 2] & 1 & 1 & 1 & 1 & 1 \cr
[2, 1, 1] & 0 & 1 & 0 & 0 & 1 \cr
[1, 2, 1] & 0 & 0 & 1 & 0 & 0 \cr
[1, 1, 2] & 0 & 0 & 0 & 1 & 1 \cr
[1, 1, 1, 1] & 0 & 0 & 0 & 0 & 1 \cr
}
\]
For $k = 3$ and $n=2$, the change of basis matrix from $\QSk$ to $M_\beta$ is:
\[
\bordermatrix{
~ & [2] & [1, 1] \cr
[2] & 1 & 1 \cr
[1, 1] & 0 & 1 \cr
}
\]
For $k = 3$ and $n=3$, the change of basis matrix from $\QSk$ to $M_\beta$ is:
\[
\bordermatrix{
~ & [3] & [2, 1] & [1, 2] & [1, 1, 1] \cr
[3] & 1 & 1 & 1 & 1 \cr
[2, 1] & 0 & 1 & 0 & 1 \cr
[1, 2] & 0 & 0 & 1 & 1 \cr
[1, 1, 1] & 0 & 0 & 0 & 1 \cr
}
\]
For $k = 3$ and $n=4$, the change of basis matrix from $\QSk$ to $M_\beta$ is:
\[
\bordermatrix{
~ & [3, 1] & [1, 3] & [2, 2] & [2, 1, 1] & [1, 2, 1]
& [1, 1, 2] & [1, 1, 1, 1] \cr
[3, 1] & 1 & 0 & 0 & 0 & 0 & 0 & 0 \cr
[1, 3] & 0 & 1 & 1 & 1 & 1 & 1 & 1 \cr
[2, 2] & 0 & 0 & 1 & 1 & 1 & 1 & 2 \cr
[2, 1, 1] & 0 & 0 & 0 & 1 & 0 & 0 & 0 \cr
[1, 2, 1] & 0 & 0 & 0 & 0 & 1 & 0 & 1 \cr
[1, 1, 2] & 0 & 0 & 0 & 0 & 0 & 1 & 1 \cr
[1, 1, 1, 1] & 0 & 0 & 0 & 0 & 0 & 0 & 1 \cr
}
\]

\section*{Acknowlegements}

The authors have benefited from conversations and computations of Tom Denton and Franco Saliola.

This research was facilitated by computer exploration using the open-source
mathematical software system \texttt{Sage}~\cite{sage} and its algebraic
combinatorics features developed by the \texttt{Sage-Combinat} community
\cite{sage-combinat}.

\bibliographystyle{halpha}
\bibliography{references} 

\end{document}